\tikzset{fontscale/.style = {font=\relsize{#1}}}
\newcommand{\bsp}[3]{\textbf{bsp}(#1)^{#2}_{#3}}
\newcommand{\R}[2]{\mathbb{R}^{#1\times #2}}
\newcommand{\HT}[0]{$\mathcal{H}^2$}
\newcommand{\Hm}[0]{$\mathcal{H}$}
\newcommand{\OB}[0]{\mathcal{O}}
\definecolor{royalblue}{rgb}{0.0, 0.0, 0.0}
\newtheorem{remark}{Remark}[section]
\newtheorem{example}{Example}[section]
\title{Simple non-extensive sparsification of the hierarchical matrices\footnotemark[4]}
\author{Daria A.~Sushnikova\footnotemark[5] \and Ivan V.~Oseledets\footnotemark[3]\ \footnotemark[5]}
\begin{document}

\maketitle
\renewcommand{\thefootnote}{\fnsymbol{footnote}}

\footnotetext[3]{Skolkovo Institute of Science and Technology,
Nobel St.~3, Skolkovo Innovation Center, Moscow, 143025
Moscow Region, Russia (i.oseledets@skolkovotech.ru)}
\footnotetext[5]{Institute of Numerical Mathematics Russian Academy of Sciences,
Gubkina St. 8, 119333 Moscow, Russia}
\footnotetext[4]{Section~\ref{sec:comp} is supported by Russian Foundation for Basic Research grant 17-01-00854,
Sections~\ref{sec:sp},~\ref{sec:h2tosp} are supported by Russian Foundation for Basic Research grant 16-31-60095,
Sectiom~\ref{sec:num} is supported by Russian Science Foundation grant 15-11-00033.}

\renewcommand{\thefootnote}{\arabic{footnote}}
%
%
%
%
%

\begin{abstract}

In this paper, we consider the matrices approximated in \HT~format.
The direct solution, as well as the preconditioning, of systems with such matrices is a challenging problem.
We propose a non-extensive sparse factorization of the \HT~matrix that allows to substitute direct \HT~solution with the solution of the system with an equivalent sparse matrix of the same size. The sparse factorization is constructed of parameters of the \HT~matrix. In the numerical experiments, we show the consistency of this approach in comparison to the other approximate block low-rank hierarchical solvers, such as HODLR\cite{greengard-hodlr-2016}, H2Lib\cite{Hakb-h2-lib}, and IFMM\cite{darve-ifmm_prec-2015}.

\end{abstract}

\begin{keywords}
    $\mathcal{H}^2$ matrix, sparse factorization, preconditioning
\end{keywords}

\maketitle

\section{Introduction}
Problems arising in the discretization of boundary integral equations (and a number of other problems with approximately separable kernels) lead to matrices that can be well-approximated
{\color{royalblue} by hierarchical block low-rank (\Hm\cite{hackbusch-h-1999,hackbusch-h-2000}, mosaic skeleton\cite{tee-mosaic-1996}) matrices. These are the matrices hierarchically divided into blocks, some of which has low-rank. The development of the \Hm~matrices is the \HT\cite{hackbusch-h2-2000,Borm-h2-2010} matrices, which are the hierarchical block low-rank matrices with nested bases. The nested basis property leads to the additional improvement in terms of storage and complexity of different operations such as matrix-vector products.}
Approximate solution and preconditioning of systems with \HT~matrices is {\color{royalblue}a} rapidly developed
area\cite{borm-h2lu-2013,greengard-hodlr-2016,darve-ifmm_prec-2015,Jiao-h2-ce-2017}, however, construction of the accurate, time and memory efficient factorization  that leads to approximate solution is still a challenging problem.
In this paper, we propose a new
representation of \HT~matrices. Namely, we show that \HT~factorization of matrix  $A \in \R{N}{N}$ is
equivalent to the factorization
\begin{equation}A = USV^{\top},\label{eq:usv}\end{equation}
where $S\in \R{N}{N}$ is a sparse matrix. Note that the size of matrix $S$ matches the size of
matrix $A$. $U\in \R{N}{N}$ and $V\in \R{N}{N}$ are orthogonal matrices that are
products of block-diagonal and permutation matrices.
Once the factorization~\eqref{eq:usv} is built, we can substitute a solution of the system
$$Ax = b,$$
by a solution of the system with the sparse matrix:
\begin{equation}Sy = U^{\top}b,\label{eq:sp0}\end{equation} where $x = Vy$.
The system~\eqref{eq:sp0} can be easily solved using standard sparse tools.
In this paper we propose:
\begin{itemize}
\item  Sparse \textbf{non-extensive}\footnote{The term \textbf{non-extensive} means that the sizes of the factors $S$, $U$ and $V$ are equal to the size of the matrix $A$ (in opposition to extensive\cite{Ambikasaran-ifmm-2014,darve-ifmm_prec-2015,sushnikova-se-2016} sparse factorizations of \HT~matrix).} factorization for \HT~matrix
that leads to the solver and the preconditioner.
\item The algorithm that allows to construct factors $U$, $S$ and $V$ from parameters of \HT \\matrix.
\item  Numerical comparison of the proposed method with HODLR\cite{ambikasaran-hodlersolver-2013},
IFMM\cite{darve-ifmm_prec-2015} and H2Lib\cite{Hakb-h2-lib} packages.
\end{itemize}
{\color{royalblue}
The main idea of the sparsification algorithm is the compression of the low-rank blocks (the very close idea of the compression of the fill-in blocks during the block Cholesky factorization of a sparse matrix is presented in works\cite{sushnikova-ce-2016,yang-ce-2016}).
The main difference between the presented sparse factorization and the other methods of sparsification\cite{Ambikasaran-ifmm-2014,darve-ifmm_prec-2015,sushnikova-se-2016} is a size of factors.
In the other methods, the hierarchical matrix is factorized into the sparse matrices of \textbf{larger sizes}.
The characteristic inflating coefficient is $k=5$ (it depends on the number of levels in the cluster tree\cite{Ambikasaran-ifmm-2014,sushnikova-se-2016}). The matrix extension is a major drawback since it increases the complexity of matrix computations.
We propose the factorization that takes \HT~matrix and returns the sparse factors of the \textbf{same size}.
Another drawback of the extended sparse factorizations is that resulting sparse matrix may lose a positive  definiteness of the original \HT~matrix. Proposed sparsification preserves symmetry and positive definiteness of the \HT~matrix.}

\section{Compression algorithm}
\label{sec:comp}
{\color{royalblue}Consider the dense matrix $A\in\R{N}{N}$ that can be approximated in \HT~format (has corresponding low-rank blocks).}
Formal definition of the \HT~matrix is presented in Section~\ref{sec:ht}, here
we give basic facts that are used in the current section.
Matrix $A$ is a block matrix with
following properties. It consists of two non-intersecting ``close'' and ``far''
matrices:
$$A = C_k + F_k,\quad k \in 0,\dots,L$$
block size of zero level is $B$, block size of $k$-th level $B_k$ is
$$B_k = 2^kB,$$
$C_k\in\R{N}{N}$ is  block-sparse matrix of full-rank close blocks,
and $F_k\in\R{N}{N}$ is block matrix of far blocks, see
Figure~\ref{fig:mata1}.
The matrix $F_k$ has low-rank block rows and block columns. Moreover, the nested basis
property holds: basis rows for block rows and columns on level $l$ are a subset
of basis rows at level $(l-1)$. This is used in the multilevel computations, which are described in Section~\ref{sec:lvl1}.

\begin{figure}[H]
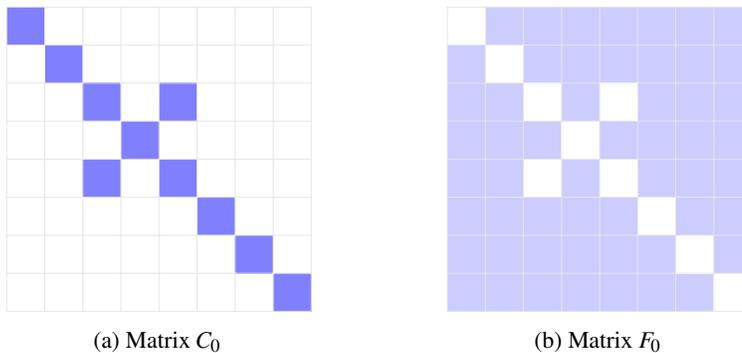

\centering
\begin{subfigure}[t]{.45\textwidth}
\centering
\resizebox{.7\textwidth}{!}{
\tikz{
   \foreach \i in {0,...,7}
   {
       \fill[blue!50!white] (\i,6-\i) rectangle (\i+1,7-\i) ;
   }
   \fill[blue!50!white] (4,4) rectangle (5,5) ;
   \fill[blue!50!white] (2,2) rectangle (3,3) ;
   \draw[step=1cm,gray!20!white] (0,-1) grid (8,7);
}}
\caption{Matrix $C_0$}
\label{fig:mata}
\end{subfigure}%
\begin{subfigure}[t]{.45\textwidth}
\centering
\resizebox{.7\textwidth}{!}{
\tikz{
   \fill[blue!20!white] (0,-1) rectangle (8,7);
   \foreach \i in {0,...,7}
   {
       \fill[white] (\i,6-\i) rectangle (\i+1,7-\i) ;
   }
   \fill[white] (4,4) rectangle (5,5) ;
   \fill[white] (2,2) rectangle (3,3) ;
   \draw[step=1cm,gray!20!white] (0,-1) grid (8,7);
}}
\caption{Matrix $F_0$}
\label{fig:mata1}
\end{subfigure}%
\caption{Close and far blocks of matrix $A$ at level $l=0$}
\label{fig:c_alg0}
\end{figure}$\\$

\subsection{Compression at zero level}
First consider the compression procedure at zero block level ($l = 0$). Assume that the number of block rows and columns at zero level is~$M$. Nonzero block $F_{ij}\in \R{B}{B}$ of far matrix $F_0$ has low rank:
$$F_{ij} \approx \widetilde{U}_i \widetilde{F}_{ij}  \widetilde{V}_j^{\top}, \quad \forall i,j \in 1,\dots, M$$
where $\widetilde{F}_{ij} \in \R{N}{N}$ is the compressed far block with the following structure:
$$ \widetilde{F}_{ij}  = \begin{bmatrix} \dot{F}_{ij} & 0\\ 0& 0\end{bmatrix},$$ where $\dot{F}_{ij} \in \R{r}{r}$. Matrices $\widetilde{U}_i \in \R{N}{N}$ and $\widetilde{V}_j \in \R{N}{N}$  are orthogonal.
The blocks in $i$-th row have the same left orthogonal
compression factor~$\widetilde{U}_i$ and all blocks in $j$-th column have the same right factor $\widetilde{V}_j^{\top}$.

The goal of the compression procedure is to sparsify the matrix $A$ by obtaining the compressed blocks $\widetilde{F}_{ij}$ instead of original blocks $F_{ij}$.
One can achieve this by finding $\widetilde{U}_i$ and $\widetilde{V}_j$ compression matrices and applying matrix $\widetilde{U}_i^{\top}$ to $i$-th row and $\widetilde{V}_j$ to $j$-th column.
We introduce the block-diagonal orthogonal
compression matrix
\begin{equation} U^{\top}_0 =
\begin{bmatrix}
\widetilde{U}_1^{\top} &0 & 0 \\
0& \ddots &0  \\
0&0 & \widetilde{U}_M^{\top}
\end{bmatrix}.
\label{eq:q}
\end{equation}
Similarly, for block columns we obtain the block-diagonal orthogonal
compression matrix \begin{equation} V_0 =
\begin{bmatrix}
\widetilde{V}_1 &0 & 0 \\
0& \ddots &0  \\
0&0 & \widetilde{V}_M
\end{bmatrix}.
\label{eq:v}
\end{equation}
Applying matrices $U_0$ and $V_0$ to the matrix $A$ we obtain the matrix $A_1$ with \emph{compressed} far matrix:
$$ A_1 = U_0^{\top} A V_0.$$
The process is illustrated in Figure~\ref{fig:c_alg01}.
\begin{figure}[H]
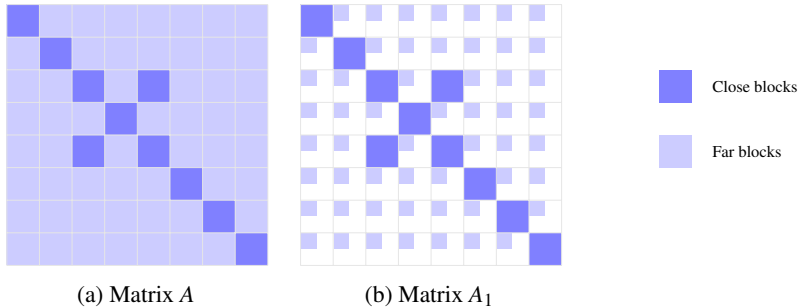

\centering
\begin{subfigure}[t]{.3\textwidth}
\centering
\resizebox{.9\textwidth}{!}{
\tikz{
   \fill[blue!20!white]  (0,-1) rectangle (8,7);
   \foreach \i in {0,...,7}
   {
       \fill[blue!50!white] (\i,6-\i) rectangle (\i+1,7-\i) ;
   }
   \fill[blue!50!white] (4,4) rectangle (5,5) ;
   \fill[blue!50!white] (2,2) rectangle (3,3) ;
   \draw[step=1cm,gray!20!white] (0,-1) grid (8,7);
}}
\caption{Matrix $A$}
\label{fig:mata}
\end{subfigure}%
\centering
\begin{subfigure}[t]{.3\textwidth}
\centering
\resizebox{.9\textwidth}{!}{
\tikz{
   \fill[blue!20!white] (0,-1) rectangle (8,7) ;
   \foreach \i in {0,...,7}
   {
       \fill[white] (\i+0.5,-1) rectangle (\i+1,7) ;
       \fill[white] (0,\i-1) rectangle (8,\i-0.5) ;
   }
   \foreach \i in {0,...,7}
   {
       \fill[blue!50!white] (\i,6-\i) rectangle (\i+1,7-\i) ;
   }
   \fill[blue!50!white] (4,4) rectangle (5,5) ;
   \fill[blue!50!white] (2,2) rectangle (3,3) ;
   \draw[step=1cm,gray!20!white] (0,-1) grid (8,7);
}}
\caption{Matrix $A_1$}
\label{fig:ahat}
\end{subfigure}%
\begin{subfigure}[t]{.3\textwidth}
\centering
\resizebox{.9\textwidth}{!}{
 \tikz{
   \draw[step=1cm,white] (-2,-5) grid (6,3);
   \fill[blue!50!white] (0,0) rectangle (1,1);
   \node [right] at (1.5,0.5) {\Large Close blocks };
   \fill[blue!20!white] (0,-1) rectangle (1,-2);
   \node [right] at (1.5,-1.5) {\Large Far blocks};
}}
\end{subfigure}%
\caption{Zero level compression}
\label{fig:c_alg01}
\end{figure}$\\$
Finally, we obtain:
$$A_1 = U_0^{\top}(C_0 + F_0)V_0 = U_0^{\top} C_0 V_0 + \widetilde{F}_1,$$
where $\widetilde{F}_1$ is a compressed far matrix which consists of blocks $\widetilde{F}_{ij}$.
Note that the matrix $C_0$ is available as one of the parameters of the \HT~format, it is a so-called ``close matrix''.

\subsection{Compression at the first level ($l=1$)}
\label{sec:lvl1}
For each block row in $A_1$ we denote the rows with zero far blocks by ``\emph{non-basis}'', and the other rows by ``\emph{first level basis}''. Assume that each block row (column) has~$r$ basis rows (columns) and $(B-r)$ non-basis.
Introduce the permutation $P_{r1}$ that puts non-basis block rows before the basis ones preserving the row order
and
permutation $P_{c1}$ that does the same for columns, see Figure~\ref{fig:a1}. For the permuted matrix
$$\widetilde{A}_1 = P_{r1}A_1P_{c1}$$
we obtain
$$\widetilde{A}_1 = \begin{bmatrix}
A_{\mathbf{n_1n_1}}&A_{\mathbf{n_1b_1}} \\
A_{\mathbf{b_1n_1}}&A_{\mathbf{b_1b_1}} \\
\end{bmatrix},$$
where $A_{\mathbf{n_1n_1}} \in \R{M(B-r)}{M(B-r)}$ is a submatrix on the intersection of non-basis rows and non-basis columns, $A_{\mathbf{b_1n_1}} \in \R{Mr}{M(B-r)}$ is on the intersection of basis rows and non-basis columns and so on, see Figure \ref{fig:a1}.
Denote the permuted far matrix:
$$\widehat{F}_1 = (P_{r1}\widetilde{F}_1P_{c1}).$$
Note that permutations $P_{r1}$ and $P_{c1}$ concentrate all nonzero blocks of compressed far zone $\widetilde{F}_1$ inside of the submatrix  $A_{\mathbf{b_1b_1}}$. 
Denote permuted close matrix:
\begin{equation}\widehat{C}_1 = (P_{r1}U_0^{\top} C_0 V_0P_{c1}).
\label{eq:cl0}
\end{equation}
Consider the submatrix  $A_{\mathbf{b_1b_1}}\in \R{Mr}{Mr}$, note that this matrix has exactly the same close and far block structure as the matrix $A$, but the block size in $A_{\mathbf{b_1b_1}}$ is $r$.
Now we join block rows and columns of the matrix $A_{\mathbf{b_1b_1}}$ by groups of $J$ blocks (e.g. $J=2$ in Figure~\ref{fig:join1} ). Assume that $Jr = B$.

We will call the grouped blocks ``big blocks''. Among these blocks, the big block that consists only of far sub-blocks will be called far,
the big block that contains at least one close small block will be referred to as close.
Denote blocks of the far matrix $\widehat{F}_1$ that become close after grouping by $\widehat{F}_{\mathbf{ml}1}$, see Figure~\ref{fig:join}.
We also introduce a new close matrix with big blocks by
\begin{equation}C_1 = \widehat{C}_1 + \widehat{F}_{\mathbf{ml}1}.
\label{eq:cl1}
\end{equation}
Denote far matrix  with big blocks by $F_1$.
Consider this joining for the block $A_{\mathbf{b_1b_1}}$:
$$A_{\mathbf{b_1b_1}} = (\widehat{C}_1)_{\mathbf{b_1b_1}} + \widehat{F}_1 = (\widehat{C}_1)_{\mathbf{b_1b_1}} + \widehat{F}_{\mathbf{ml}1} + F_1 =
(C_1)_{\mathbf{b_1b_1}} + F_1.$$
\begin{figure}[H]
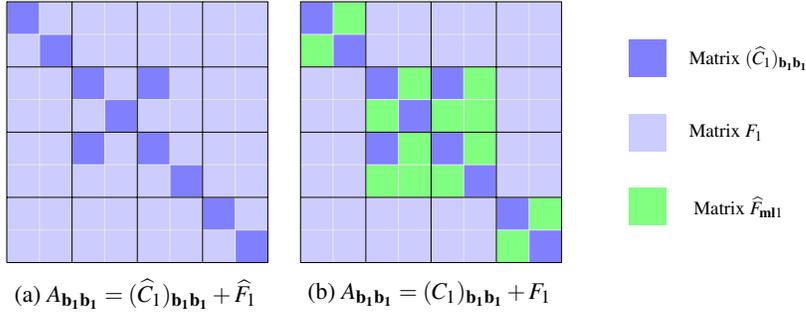

\centering
\begin{subfigure}[t]{.3\textwidth}
\centering
\resizebox{.9\textwidth}{!}{
\tikz{
   \fill[blue!20!white]  (0,-1) rectangle (8,7);
   \foreach \i in {0,...,7}
   {
       \fill[blue!50!white] (\i,6-\i) rectangle (\i+1,7-\i) ;
   }
   \fill[blue!50!white] (4,4) rectangle (5,5) ;
   \fill[blue!50!white] (2,2) rectangle (3,3) ;
   \draw[step=1cm,gray!20!white] (0,-1) grid (8,7);
   \draw[step=2cm, black, shift={(0,-3)}] (0,2) grid (8,10);
}}
\caption{$A_{\mathbf{b_1b_1}} = (\widehat{C}_1)_{\mathbf{b_1b_1}} + \widehat{F}_1 $}
\label{fig:join1}
\end{subfigure}%
\centering
\begin{subfigure}[t]{.3\textwidth}
\centering
\resizebox{.9\textwidth}{!}{
\tikz{
   \fill[blue!20!white]  (0,-1) rectangle (8,7);
   \foreach \i in {0,...,7}
   {
       \fill[blue!50!white] (\i,6-\i) rectangle (\i+1,7-\i) ;
   }
   \fill[blue!50!white] (4,4) rectangle (5,5) ;
   \fill[blue!50!white] (2,2) rectangle (3,3) ;
   \fill[green!50!white] (0,5) rectangle (1,6) ;
   \fill[green!50!white] (2,3) rectangle (3,4) ;
   \fill[green!50!white] (2,1) rectangle (3,2) ;
   \fill[green!50!white] (3,1) rectangle (4,2) ;
   \fill[green!50!white] (3,2) rectangle (4,3) ;
   \fill[green!50!white] (4,1) rectangle (5,2) ;
   \fill[green!50!white] (6,-1) rectangle (7,0) ;

   \fill[green!50!white] (7,0) rectangle (8,1) ;
   \fill[green!50!white] (5,2) rectangle (6,3) ;
   \fill[green!50!white] (4,3) rectangle (6,4) ;
   \fill[green!50!white] (5,4) rectangle (6,5) ;
   \fill[green!50!white] (3,4) rectangle (4,5) ;
   \fill[green!50!white] (1,6) rectangle (2,7) ;
   \draw[step=1cm,gray!20!white] (0,-1) grid (8,7);
   \draw[step=2cm, black, shift={(0,-3)}] (0,2) grid (8,10);
}}
\caption{$A_{\mathbf{b_1b_1}} = (C_1)_{\mathbf{b_1b_1}} + F_1 $}
\label{fig:st0}
\end{subfigure}%
\begin{subfigure}[t]{.3\textwidth}
\centering
\resizebox{.9\textwidth}{!}{
 \tikz{
   \draw[step=1cm,white] (-1,-5) grid (6,2);
   \fill[blue!50!white] (0,0) rectangle (1,1);
   \node [right] at (1.5,0.5) {\Large Matrix $(\widehat{C}_1)_{\mathbf{b_1b_1}} $ };
   \fill[blue!20!white] (0,-1) rectangle (1,-2);
   \node [right] at (1.5,-1.5) {\Large  Matrix $F_1$};
   \fill[green!50!white] (0,-4) rectangle (1,-3);
   \node [right] at (1.5,-3.5) {\Large \text{ Matrix $\widehat{F}_{\mathbf{ml}1}$ }};
}}
 \label{fig:1}
\end{subfigure}%
\caption{Small ($r$-size) and big ($B$-size) far and close blocks of matrix $A_{\mathbf{b_1b_1}}$}
\label{fig:join}
\end{figure} $\\$
Similarly, for the matrix $\widetilde{A}_1$:
$$\widetilde{A}_1 = \widehat{C}_1 + \widehat{F}_1 = \widehat{C}_1 + \widehat{F}_{\mathbf{ml}1} + F_1 =
C_1 + F_1.$$
It can be shown that block rows and columns of the matrix $F_1$ have low-rank by the properties of the \HT~matrix $A$.
Similarly to \eqref{eq:q} compute orthogonal
block-diagonal matrices $\\U_{\mathbf{b_1}}, V_{\mathbf{b_1}} \in \R{Mr}{Mr}$ that compress matrix $F_1$.

Multiplication of matrix $F_1$  by matrices $U_{\mathbf{b_1}}$ and $ V_{\mathbf{b_1}}$ leads to compression:
\begin{equation}\widetilde{F}_2 = U_{\mathbf{b_1}}^{\top} F_1 V_{\mathbf{b_1}},
\label{eq:comp0}
\end{equation}
where the matrix $\widetilde{F}_2$ consists of compressed blocks.

Now we introduce extended matrices  $U_{\mathbf{b_1}}$ and $V_{\mathbf{b_1}}$ that can be applied to matrix $\widehat{A}_1$:
\begin{equation} U_1 =
\begin{bmatrix}
I_{(N-Mr) \times (N-Mr)}&0  \\
0&U_{\mathbf{b_1}} \\
\end{bmatrix}, \quad
V_1 =
\begin{bmatrix}
I_{(N-Mr) \times (N-Mr)}&0  \\
0&V_{\mathbf{b_1}} \\
\end{bmatrix}.
\label{eq:u1v1}
\end{equation}

Applying matrices $U_1$ and $V_1$ to matrix $\widetilde{A}_1$ we obtain the matrix with compressed first level:
$$A_2 = U_1^{\top} \widetilde{A}_1 V_1.$$
The process of the first level compression is shown in Figure \ref{fig:a1h}.
\begin{figure}[H]
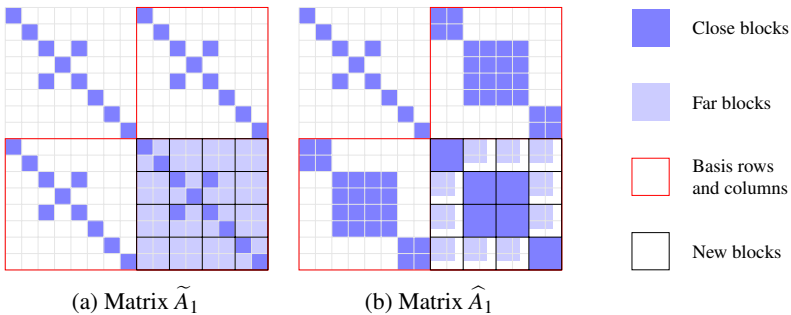

\centering
\begin{subfigure}[t]{.3\textwidth}
\centering
\resizebox{.9\textwidth}{!}{
\tikz{
   \foreach \i in {-8,...,-1}
   {
       \fill[blue!50!white] (\i,7-\i) rectangle (\i+1,8-\i) ;
   }
   \fill[blue!50!white] (4-8,13) rectangle (5-8,14) ;
   \fill[blue!50!white] (2-8,11) rectangle (3-8,12) ;

   \foreach \i in {-7,...,0}
   {
       \fill[blue!50!white] (\i-1,-\i) rectangle (\i,1-\i) ;
   }
   \fill[blue!50!white] (4-8,4+1) rectangle (5-8,5+1) ;
   \fill[blue!50!white] (2-8,2+1) rectangle (3-8,3+1) ;
   \foreach \i in {-7,...,0}
   {
       \fill[blue!50!white] (\i+7,8-\i) rectangle (\i+8,9-\i) ;
   }
   \fill[blue!50!white] (4,4+9) rectangle (5,5+9) ;
   \fill[blue!50!white] (2,2+9) rectangle (3,3+9) ;
   \fill[blue!20!white] (0,0) rectangle (8,8) ;
   \foreach \i in {0,...,7}
   {
       \fill[blue!50!white] (\i,7-\i) rectangle (\i+1,8-\i) ;
   }
   \fill[blue!50!white] (4,4+1) rectangle (5,5+1) ;
   \fill[blue!50!white] (2,2+1) rectangle (3,3+1) ;
   \draw[step=1cm,gray!20!white] (-8,0) grid (8,16);
   \draw[red] (-8,0) rectangle (8,8);
   \draw[red] (0,0) rectangle (8,16);
   \draw[step=2cm,black] (0,0) grid (8,8);
}}
\caption{Matrix $\widetilde{A}_1$}
\label{fig:a1}
\end{subfigure}%
\centering
\begin{subfigure}[t]{.3\textwidth}
\centering
\resizebox{.9\textwidth}{!}{
\tikz{
   \foreach \i in {-8,...,-1}
   {
       \fill[blue!50!white] (\i,7-\i) rectangle (\i+1,8-\i) ;
   }
   \fill[blue!50!white] (4-8,13) rectangle (5-8,14) ;
   \fill[blue!50!white] (2-8,11) rectangle (3-8,12) ;

   \foreach \i in {0,...,3}
   {
       \fill[blue!50!white] (2*\i-8,6-2*\i) rectangle (2*\i+2-8,8-2*\i) ;
   }
   \fill[blue!50!white] (4-8,3+1) rectangle (6-8,5+1) ;
   \fill[blue!50!white] (2-8,1+1) rectangle (4-8,3+1) ;

   \foreach \i in {0,...,3}
   {
       \fill[blue!50!white] (2*\i,14-2*\i) rectangle (2*\i+2,16-2*\i) ;
   }
   \fill[blue!50!white] (4,10+2) rectangle (6,12+2) ;
   \fill[blue!50!white] (2,8+2) rectangle (4,10+2) ;

   \fill[blue!20!white] (0,0) rectangle (8,8) ;
   \foreach \i in {0,...,3}
   {
       \fill[white] (2*\i+1.5,0) rectangle (2*\i+2,8) ;
       \fill[white] (0,2*\i) rectangle (8,2*\i+0.5) ;
   }

   \draw[step=1cm,gray!20!white] (-8,0) grid (8,16);
   \foreach \i in {0,...,3}
   {
       \fill[blue!50!white] (2*\i,6-2*\i) rectangle (2*\i+2,8-2*\i) ;
   }
   \fill[blue!50!white] (4,3+1) rectangle (6,5+1) ;
   \fill[blue!50!white] (2,1+1) rectangle (4,3+1) ;
   \draw[red] (-8,0) rectangle (8,8);
   \draw[red] (0,0) rectangle (8,16);
   \draw[step=2cm,black] (0,0) grid (8,8);
}}
\caption{Matrix $\widehat{A}_1$}
\label{fig:a1h}
\end{subfigure}%
\begin{subfigure}[t]{.3\textwidth}
\centering
\resizebox{.9\textwidth}{!}{
 \tikz{
   \draw[step=1cm,white] (-1,-6) grid (6,1);
   \fill[blue!50!white] (0,0) rectangle (1,1);
   \node [right] at (1.5,0.5) {\Large Close blocks };
   \fill[blue!20!white] (0,-1) rectangle (1,-2);
   \node [right] at (1.5,-1.5) {\Large Far blocks};
   \draw[red] (0,-4) rectangle (1,-3);
   \node [right] at (1.5,-3.2) {\Large Basis rows };
   \node [right] at (1.5,-3.8) {\Large and columns};
   \draw[black] (0,-6) rectangle (1,-5);
   \node [right] at (1.5,-5.5) {\Large New blocks};
}}
\end{subfigure}%
\caption{The first level compression}
\label{fig:c_alg0}
\end{figure}
For the first level we obtain
$$ A_2 = U_1^{\top} (C_1 + F_1) V_1 = U_1^{\top}C_1V_1 + \widehat{F}_2.$$
\subsection{Compression at all levels}
We apply permutation and repeat this procedure $L$ times and obtain:
\begin{equation}\label{eq:fin}
\begin{gathered}
 A_1 = U_0^{\top} A V_0 = U_0^{\top} C_0 V_0 + \widehat{F}_1 \\
 A_2 = U_1^{\top} U_0^{\top} A V_0 V_1  =  U_1^{\top} C_1 V_1 + \widehat{F}_2 \\
 \vdots \\
 A_L = \left(\prod_{k=0}^LU_k^{\top}\right)A \left(\prod_{k=L}^0V_k\right) = U_{L-1}^{\top} C_{L-1} V_{L-1} + \widehat{F}_L = S,\\
\end{gathered}
\end{equation}
thus
$$A = \left(\prod_{k=L}^0U_k\right)S \left(\prod_{k=0}^LV_k^{\top} \right).$$
If we denote
\begin{equation}U = \prod_{k = 0}^{L}U_{k},\quad V = \prod_{k = 0}^{L}V_k,
\label{eq:main_uv}
\end{equation}
then the final result of the algorithm is a sparse approximate factorization
\begin{equation}
A = U S V^{\top},
\label{eq:main_fact}
\end{equation}
where $S$ is a sparse matrix of the same size as matrix $A$, $U$ and $V$ are orthogonal
matrices that are products of permutation and block-diagonal orthogonal
matrices.

\begin{remark}
If matrix $A$ is approximated into \HT~format, then matrices $S$, $U$ and $V$ can be constructed from parameters of the \HT~representation, see details in Section~\ref{sec:h2tosp}.
\end{remark}

\begin{remark}
Sparsity of the matrix $S$ is proven in Section \ref{sec:sp}.
\end{remark}
\begin{proposition}
If the matrix $A$ is symmetric and positive definite, then the factors $U$ and $V$ are equal and the matrix $S$ is symmetric and positive definite.
\end{proposition}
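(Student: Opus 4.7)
The plan is to argue by induction on the compression level $k$ that the intermediate matrix $A_{k+1}$ remains symmetric positive definite and that one can choose $U_k=V_k$ (including their permutation components) at every step. Once $U_k=V_k$ for all $k$, (\ref{eq:main_uv}) gives $U=V$, and (\ref{eq:main_fact}) then yields $S=U^{\top}A U$, which is a congruence of the symmetric positive definite matrix $A$ by an orthogonal matrix and is therefore itself symmetric and positive definite.

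For the base step $k=0$, symmetry of $A$ forces the far matrix $F_0$ to satisfy $F_{ji}=F_{ij}^{\top}$. Hence the row space of block row $i$ coincides with the column space of block column $i$ transposed, and the orthogonal left compressor $\widetilde U_i$ of block row $i$ can be chosen equal to the orthogonal right compressor $\widetilde V_i$ of block column $i$ (for example, by taking in both cases the same left singular factor of block row $i$). The definitions (\ref{eq:q}) and (\ref{eq:v}) then yield $V_0=U_0$, so $A_1=U_0^{\top}A\,U_0$ is symmetric, and as an orthogonal congruence of $A$ it is positive definite.

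For the inductive step, symmetry of $A_k$ ensures that the partition of its block rows into basis/non-basis rows coincides with the analogous partition of its block columns. One can therefore choose $P_{r(k+1)}$ and $P_{c(k+1)}$ as the same underlying index permutation so that $P_{c(k+1)}=P_{r(k+1)}^{\top}$, which keeps $\widetilde A_{k+1}=P_{r(k+1)}A_k P_{r(k+1)}^{\top}$ symmetric. The trailing block $A_{\mathbf b_{k+1}\mathbf b_{k+1}}$ is then symmetric with exactly the far/close structure of the base step, so the same SVD-based argument lets us pick $U_{\mathbf b_{k+1}}=V_{\mathbf b_{k+1}}$, and padding by identities via (\ref{eq:u1v1}) gives $U_{k+1}=V_{k+1}$. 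Consequently $A_{k+2}=U_{k+1}^{\top}A_{k+1}U_{k+1}$ is symmetric and positive definite, completing the induction.

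The only delicate point is the freedom in the base and inductive choice of compressors: singular vectors are not unique, and one must verify that the nonuniqueness never forces a mismatch between the row- and column-side factors. This is transparent once each compressor is defined as the left orthogonal factor of the block row (equivalently, of the transposed block column), which is how the statement "$U$ and $V$ are equal" should be read—as an existence claim. Everything else reduces to the elementary fact that orthogonal congruence preserves symmetry and positive definiteness.
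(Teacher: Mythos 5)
Your proposal is correct and follows essentially the same route as the paper's (much terser) proof: symmetry of $A$ lets one choose $U_k=V_k$ at every level, so by~\eqref{eq:main_uv} $U=V$, and $S=U^{\top}AU$ is an orthogonal congruence of $A$, hence symmetric positive definite. Your induction over levels and the remark that the claim is really an existence statement about the choice of compressors simply make explicit what the paper asserts in one line.
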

\begin{proof}
If the matrix $A$ is symmetric and positive definite, then from the steps of compression algorithm, compression matrices $U_i$ and $V_i$, $i\in 0,\dots,L$ are equal, thus, by equations~\eqref{eq:main_uv}, $U = V$. Since $S = U^{\top}AV$, $U=V$ and $A$ is symmetric and positive definite, then $S$ is symmetric and positive definite matrix.
\end{proof}
\begin{remark}
The proposed sparsification algorithm is applicable to the special cases of \HT~matrices such as HSS {\color{royalblue}(Hierarchically Semiseparable)}, HOLDR {\color{royalblue}(Hierarchical Off-Diagonal Low-Rank)}.
\end{remark}

\subsection{Pseudo code of the compression algorithm}$\\$
\begin{algorithm}[H]
\label{al:comp}
\SetKwProg{D}{Input:}{}{}
\SetKwProg{I}{Initialization:}{}{}
\SetKwProg{ol}{Compression:}{}{}
\SetKwFunction{Out}{Output:}
\SetKwProg{fact}{Factorization:}{}{}
\SetKwProg{bs}{Backward step:}{}{}
\SetKwProg{O}{Output:}{}{}
\SetKw{ret}{return:}
\caption{Compression algorithm}
\D{}{$A \in \mathbb{R}^{N \times N}$ - matrix with \HT~structure\\
$L$ - number of levels\\
}
\ol{}{
   \For{$k = 0, \dots, L$}{
       $M_k$ - number of blocks on level $M_k$\\
       $P_{rk}, \, P_{ck}$ basis-non-basis permutations\\
       $\widetilde{A}_k = P_{rk}A_1P_{ck}$, ($P_{r0} = I, P_{c0}=I$)\\
       \For{$i = 1,\dots,M_k$}{
           Compute $U_{ki}$ (using SVD of $i$-th  block row of $A_{\mathbf{b_kb_k}}$)\\
           Compute $V_{ki}$ (using SVD of $i$-th  block column of $A_{\mathbf{b_kb_k}}$)\\
       }
       $U_k = P_{rk}\,\mathbf{diag}(U_{k1}, \dots, U_{kM_k})$\\
       $V_k = P_{ck}\,\mathbf{diag}(V_{k1}, \dots, V_{kM_k})$\\
       $A_{j+1} = U_jA_jV_j^{\top}$\\
   }

}

\O{Factorization  $A \approx USV^{\top}$}{
$U = \left(\prod_{k=0}^{L} U_k\right),$\\
$V = \left(\prod_{k=0}^{L} V_k\right),$\\
$S = A_L = \left(\prod_{k=0}^LU_k^{\top}\right)A \left(\prod_{k=L}^0V_k\right)$ \\
}
\end{algorithm}

\section{Sparsity of the matrix $S$}
\label{sec:sp}

First, define the block sparsity pattern of a block sparse matrix. For a matrix~$A$ with $M_1$ block columns, $M_2$ block rows and block size $B$ define $\bsp{A}{M_1\times M_2}{B \times B}$ (block sparsity pattern) as a function
$$\textbf{bsp}: \mathbb{R}^{M_1B\times M_2B} \rightarrow \mathbb{B}^{M_1\times M_2},$$
where $\mathbb{B} = \left \{ 0,1\right\}$. The function takes block matrix $A\in \mathbb{R}^{M_1B\times M_2B}$ as input and returns as output the matrix $R = \bsp{A}{M_1\times M_2}{B \times B} \in \mathbb{B}^{M_1\times M_2}$ such that
$$ \left\{\begin{matrix}
R_{ij} = 1, \quad \text{if} \quad A_{ij}\in \mathbb{R}^{B\times B}\quad \text{is~nonzero~block}, \\
R_{ij} = 0, \quad \text{if} \quad A_{ij}\in \mathbb{R}^{B\times B}\quad \text{is~zero~block.~~~~~}
\end{matrix}\right. $$
By $\#\bsp{A}{M_1\times M_2}{B \times B}$ define the number of nonzero blocks of matrix~$A$ and the number of ones in matrix $R$.
\begin{proposition}
If the matrix $A$ has \HT~structure, the compression Algorithm~\ref{al:comp} has $L$ levels, the block size on each level is $B$, the matrix $A$ has zero level close matrix $C$, and the far blocks are compressed with rank $r = B/2$, then the
compression algorithm for the matrix $A$ leads to the factorization:
$$A  = U^{\top}SV$$ where $U$ and $V$ are orthogonal
matrices equal to the multiplication of block-diagonal compression and permutation matrices
$$U= \left(\prod_{j=0}^{K} U_jP_j^{\top}\right), \quad V = \left(\prod_{j=0}^{K} V_jP_j^{\top}\right),$$
$S$ is a \textbf{sparse} matrix that has
$$ \#S \leqslant \left(4L + 6(\frac{1}{2^L}-1)\right) \#\bsp{C}{M \times M}{B\times B}$$
nonzero blocks\footnote{The symbol \# before the matrix means the number of nonzero $(r \times r)$ blocks in this matrix.} of size $(r \times r)$.
\begin{proof}
Consider the matrix $S$ from~\eqref{eq:main_fact}. Let matrix $S_{ij}$ correspond to $i$-th level non-basis hyper row and $j$-th level non-basis hyper column.
Thanks to basis-non-basis row and column permutations $P_{ri}$ and $P_{ci}$ matrix $S$ is separated into blocks $S_{ij}$, where $i,j \in {0,\dots, L}$.

The number of nonzero blocks in $S$ is equal to sum of nonzero blocks in $S_{ij}$:
$$\#S  = \sum_{i = 0}^{L} \sum_{j=0}^{L} \#S_{ij}.$$
Let us compute the number of nonzero blocks in block $S_{ij}$. Since on each level we join block rows by groups of $J$ blocks, we obtain:
$$ \bsp{S_{ij}}{M/2^{i} \times M/2^{j}}{r\times r} =  \bsp{C}{M/2^{i} \times M/2^{j}}{2^{i}B\times 2^{j}B},$$
where $i,j \in 0, \dots, L$.

Note that
$$\bsp{C}{M/2^{i} \times M/2^{j}}{2^{i}B\times 2^{j}B} \leqslant \frac{\#\bsp{C}{M \times M}{B\times B}}{2^{\min(i,j)}}$$

Thus
$$\#S  \leqslant \sum_{i=0}^{L} \sum_{j=0}^{L} \#\bsp{C}{M/2^{i} \times M/2^{j}}{2^{i}B\times 2^{j}B} =\sum_{i=0}^{L} \left(\frac{2(L-i)-1}{2^i}\right)\#\bsp{C}{M \times M}{B\times B}=$$
$$ = \left(4L + 6(\frac{1}{2^L}-1)\right)\#\bsp{C}{M \times M}{B\times B}.$$
Obtain
$$ \#S \leqslant \left(4L + 6(\frac{1}{2^L}-1)\right) \#\bsp{C}{M \times M}{B\times B}.$$
Thus, the number of nonzero blocks in matrix $S$ is less than the number of nonzero blocks in close matrix $C$ multiplied by constant ($4L + 6(\frac{1}{2^L}-1)$), if matrix $C$ is block-sparse, and all proposition conditions are met, then matrix $S$ is also sparse.
\end{proof}

\end{proposition}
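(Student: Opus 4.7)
The plan is to count the nonzero $(r \times r)$ blocks of $S$ by decomposing it according to the basis/non-basis permutations used at every level of Algorithm~\ref{al:comp}. The key observation is that after applying $P_{rk}, P_{ck}$ at level $k$, the non-basis rows and columns split off at that level are never touched again, so $S$ admits a natural block decomposition $S = (S_{ij})$, where $S_{ij}$ sits at the intersection of the rows separated as non-basis at level $i$ and the columns separated at level $j$. Additivity then gives $\#S = \sum_{i,j} \#S_{ij}$, and it suffices to control each $S_{ij}$ separately.

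The main step, and the one I expect to be hardest to make rigorous, is identifying the block-sparsity pattern of each $S_{ij}$ with that of the close matrix at level $\min(i,j)$ already built by~\eqref{eq:cl0}--\eqref{eq:cl1}. All operations applied after level $k$ act on the surviving basis rows and columns only, as block-diagonal orthogonal compressions $U_m, V_m$ together with the grouping updates. One needs to verify that the compressions do not create new far interactions, and that grouping does not expand the support beyond the close pattern coarsened by factors of $2$ on each axis. Once this is established, one obtains the identification
\[
\bsp{S_{ij}}{M/2^{i} \times M/2^{j}}{r\times r} = \bsp{C}{M/2^{i} \times M/2^{j}}{2^{i}B\times 2^{j}B}.
\]

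Third, I would bound the block count of the coarsened close pattern by that of the original. Grouping block rows and columns in bunches of $J=2$ at every level means that up to $2^{\min(i,j)}$ nonzero $B \times B$ blocks of $C$ collapse into a single nonzero block of the coarsened pattern, so that
\[
\#\bsp{C}{M/2^{i}\times M/2^{j}}{2^{i}B\times 2^{j}B} \leq \#\bsp{C}{M\times M}{B\times B}/2^{\min(i,j)}.
\]

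Finally, summing yields $\#S \leq \#\bsp{C}{M\times M}{B\times B} \sum_{i,j} 2^{-\min(i,j)}$. The double sum reduces to $\sum_{k} N_k / 2^k$, where $N_k$ counts pairs with $\min(i,j)=k$ and depends linearly on $L-k$; routine evaluation using the standard identities for $\sum k/2^k$ and $\sum 1/2^k$ produces the constant $4L + 6(2^{-L}-1)$ stated in the proposition. Combined with the block-sparsity of $C$, this transfers sparsity to $S$ with the claimed multiplicative factor and completes the argument.
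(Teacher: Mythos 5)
Your proposal follows essentially the same route as the paper's own proof: the same decomposition of $S$ into blocks $S_{ij}$ indexed by the level at which rows and columns become non-basis, the same identification $\bsp{S_{ij}}{M/2^{i}\times M/2^{j}}{r\times r}=\bsp{C}{M/2^{i}\times M/2^{j}}{2^{i}B\times 2^{j}B}$, the same $2^{-\min(i,j)}$ coarsening bound, and the same final summation. The level of rigor is also comparable (the paper likewise asserts rather than verifies the pattern identification), so nothing further is needed.
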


\section{Building sparse factorization from \HT~coefficients}
\label{sec:h2tosp}

\subsection{Definition of \HT~matrix}
\label{sec:ht}
In this section we consider the matrix $A$ approximated in \HT~format. There exists a number of efficient ways to build
this approximation\cite{mikhalev-h2tools-2016,Borm-h2-2010}. In this paper, we do not consider the process of building the \HT~matrix and assume that it is given.
Let us explain in details how to construct factors in the decomposition~\eqref{eq:main_fact} from parameters of \HT~matrix $A$.
First, we give the definition of \HT~matrix, the more detailed definition can be found in\cite{Borm-h2-2010}.
{\color{royalblue}

\begin{definition}[Row and column cluster trees]{}
\label{def:ct}
Cluster trees of rows and columns $\mathcal{T}_r$ and $\mathcal{T}_c$ define the hierarchical division of block rows and columns.
At each level of the row cluster tree $\mathcal{T}_r$, each node corresponds to a block row of the matrix $A$, child nodes correspond to the subrows of this row. Same for the column cluster tree $\mathcal{T}_c$. 
\end{definition}

\begin{definition}[Block cluster tree]{}
Let $\mathcal{T}_{rc}$ be a tree. $\mathcal{T}_{rc}$ is a block cluster tree for $\mathcal{T}_r$ and $\mathcal{T}_c$ if it satisfies the following conditions:
\begin{itemize}
   \item $\text{root}(\mathcal{T}_{rc}$) = ($\text{root}(\mathcal{T}_{r}$),$\text{root}(\mathcal{T}_{c}$)).
   \item Each node $b\in\mathcal{T}_{rc}$ has the form $b=(t,s)$ for $t\in\mathcal{T}_r$ and $s\in\mathcal{T}_c$.
   \item Let $b=(t,s)\in\mathcal{T}_{rc}$. If $\text{sons}(b)\neq \varnothing$, then
   $$\text{sons}(b) = \begin{cases} \{t\}\times\text{sons}(s) \quad \text{~if~sons}(t) \neq \varnothing,\text{sons}(s) \neq \varnothing,\\
                                    \text{sons}(t)\times\{s\} \quad \text{~if~sons}(t) \neq \varnothing,\text{sons}(s) \neq \varnothing,\\
                                    \text{sons}(t)\times\text{sons}(s) \quad otherwise.
   \end{cases}$$
\end{itemize}

\end{definition}

\begin{definition}[Admissibility condition]{}
\label{def:com}
Let $\mathcal{T}_r$ and $\mathcal{T}_r$  be a row and column cluster trees. A predicate
$$\mathcal{A} = \mathcal{T}_r \times \mathcal{T}_c \xrightarrow{} \{\text{True,False}\}$$
is an admissibility condition for $\mathcal{T}_r$ and $\mathcal{T}_r$ if
$$ \mathcal{A}(t,s) \Longrightarrow \mathcal{A}(t',s) \quad \text{~holds~for~all~} \quad t\in \mathcal{T}_r, s\in\mathcal{T}_c,t'\in\text{sons}(t)$$
and
$$ \mathcal{A}(t,s) \Longrightarrow \mathcal{A}(t,s') \quad \text{~holds~for~all~} \quad t\in \mathcal{T}_r, s\in\mathcal{T}_c,s'\in\text{sons}(s).$$
If $\mathcal{A}(t,s)$, the pair (t,s) is called admissible.
\end{definition}

\begin{definition}[Admissibility block cluster tree]{}
   Let $\mathcal{T}_{rc}$ be a block cluster tree for $\mathcal{T}_r$ and $\mathcal{T}_c$, let $\mathcal{A}$ be an admissibility condition. If for each
   $(t,s)\in\mathcal{T}_{rc}$ either $\text{sons}(t)\neq \varnothing \neq\text{sons}(s)$ or $ \mathcal{A}(t,s) = $ True holds, the block cluster tree $\mathcal{T}_{rc}$ called $ \mathcal{A}$-Admissible.
\end{definition}

\begin{definition}[Farfield and nearfield]{}
   Let $\mathcal{T}_{rc}$ be a block cluster tree for $\mathcal{T}_r$ and $\mathcal{T}_c$, let $\mathcal{A}$ be an admissibility condition. The index set
   $$ \mathcal{I}_{N\times N}^{+} := \{(t,s) \in\mathcal{I}_{N\times N} \quad : \mathcal{A}(t,s) = \text{True}\}$$
   is called set of farfield blocks. The index set
   $$ \mathcal{I}_{N\times N}^{-} := \{(t,s) \in\mathcal{I}_{N\times N} \quad : \mathcal{A}(t,s) = \text{False}\}$$
   is called set of nearfield blocks.
\end{definition}

\begin{definition}[Cut-off matrices]{}
\label{def:com}
Let $\mathcal{T}_r$ be a row cluster tree. For all tree nodes $t\in \mathcal{T}_r$ the cut-off matrix $\chi_t\in\R{N}{N}$ corresponding to $t$ is defined by
$$ (\chi_t)_{ij} = \begin{cases} 1 \quad \text{~if~} i=j \in t,\\ 0 \quad\text{~otherwise},\end{cases} \text{~for~all~} i,j \in N.$$
\end{definition}

\begin{definition}[Cluster basis]{}
\label{def:cb}
Let $K = (K_t)_{t\in\mathcal{T}_r}$ be a family of finite index sets ({\it rank distribution for} $\mathcal{T}_r$). Let $R = (R_t)_{t\in\mathcal{T}_r}$ be a family of matrices satisfying $R_t\in\mathbb{R}_{\hat{t}}^{N\times K_t}$ for all $t\in \mathcal{T}_r$. Then $R$ is called row cluster basis and the matrices $R_t$ are called row cluster basis matrices. Analogically for column cluster basis $E$ and column cluster
basis matrices $E_s$,  $s\in \mathcal{T}_c$.
\end{definition}

\begin{definition}[Close matrix]{}
\label{def:close}
The matrix $C \in \mathbb{R}^{N \times N}$ is close if
$$C = \sum_{b = (s,t)\in\mathcal{I}_{N\times N}^{-}} \chi_t A \chi_s$$
\end{definition}

\begin{definition}[$\mathcal{H}^2$ matrix]{}
\label{def:ht}
The matrix $A \in \mathbb{R}^{N \times N}$ is approximated in \HT~format if $\mathcal{T}_c$ and $\mathcal{T}_r$ are block cluster trees of columns and rows of matrix $A$, if there exist a row cluster basis $R$ (row transition matrices), the column cluster basis $E$ (column transition matrices), a family $D = (D_b)_{b\in\mathcal{I}_{N\times N}^{+}}$ of matrices satisfying $D_b \in \R{K_t}{L_s}$
for all $b=(s,t)\in \mathcal{I}_{N\times N}^{+}$
(interaction list), and close matrix $C$, and if
$$A = C + \sum_{b = (s,t)\in\mathcal{I}_{N\times N}^{+}} R_t \,D_b \,E^{*}_s.$$
\end{definition}
}

\subsection{Construction of matrices $U$ and $V$ from the coefficients of \HT~matrix}
\label{sec:su}
Let us first construct orthogonal
matrices $U$ and $V$ from the factorization~\eqref{eq:main_fact}. According to equation~\eqref{eq:main_uv}:
$$ U = \prod_{k = 0}^{L}P_{rk}U_{k},$$
and
$$V = \prod_{k = 0}^{L}P_{ck}V_k.$$
Note that matrices $U_k$ and $V_k, k \in 0,\dots, L$,  are very close in their meaning to cluster
basis matrices $R_t$ and $E_s$ both are level compression matrices. The difference between these matrices is that the diagonal blocks of matrices $U_k$ and $V_k$ are square orthogonal blocks, and diagonal blocks of matrices $R_t$ and $E_s$ are rectangular non-orthogonal
blocks.

Thus we can take matrices $R_t$ and $E_s$, orthogonalize blocks, complete each block to square orthogonal
block and obtain the matrices $U_k$ and $V_k$. The algorithm that orthogonalizes blocks of matrices $R_t$ and $E_s$ is
known as the \HT~compression algorithm, it can be found in\cite{Borm-h2-2010,borm-h2svd-2012}. Compression of blocks can be done by QR decomposition of blocks with the square $Q$ factor.
Permutations $P_{rk}$ and $P_{ck}$ can be constructed from the cluster trees.

\subsection{Construction of the matrix $S$ from the coefficients of \HT~matrix}
\label{sec:cs}

According to equations~\eqref{eq:fin}, equation~\eqref{eq:cl0} and equation~\eqref{eq:cl1}:
$$S = U_{L-1}^{\top}C_{L-1}V_{L-1} + \widehat{F}_L = $$
$$ = U_{L-1}^{\top}(\dots(U_1^{\top}(U_0^{\top} C_0 V_0 + \widehat{F}_{\mathrm{ml}1})  V_1 + \widehat{F}_{\mathrm{ml}2}) \dots)V_{L-1}+  \widehat{F}_L.$$

Construction of matrices $U_i$ and $V_i$ is shown in the previous subsection, matrix $C_0$ is stored in the \HT~matrix explicitly as close matrix $C$, matrices $\widehat{F}_{\mathrm{ml}i}$ are exactly matrices $D_i$ from interaction list, matrix $\widehat{F}_L$ is matrix $D_L$. Thus, matrix $S$ can be easily computed from the coefficients of the \HT~matrix.

\section{Numerical experiments}
\label{sec:num}
Sparsification algorithm is implemented in the Python programming language. For the \HT~matrix implementation we use the h2tools\cite{mikhalev-h2tools-2016} library. 
All computations are performed on MacBook Air with a 1.3GHz Intel Core i5 processor and 4
GB 1600 MHz DDR3 RAM.

{\color{royalblue} First, we numerically show that the matrix $S$ in factorization
\eqref{eq:main_fact} is indeed sparse. Then we give the timing and storage requirements of the sparse factorization. }
Thereafter we consider the sparse factorization of the \HT~matrix combined with a sparse direct solver as a direct solver for {\color{royalblue}the system with} the \HT~matrix and compare this approach with HODLR direct solver {\color{royalblue} and \HT-LU solver from \Hm2Lib  library}.
Finally, we consider sparse factorization with the matrix $S$ factorized by the ILUt method as a preconditioner for GMRES solver.

{\color{royalblue}We want to note that the sparsification algorithm \textbf{does not worsen} the accuracy of the \HT~approximation. It follows from the algorithm and it is confirmed in the experiments. Therefore, in the experiments below, we omit the accuracy of the sparse factorization and show only the \HT~approximation accuracy to avoid redundancy.}
{\color{royalblue}
\subsection{Sparsity of the factor $S$}
}

In Section \ref{sec:sp} we have studied the sparsity of the factor $S$ from the factorization \eqref{eq:main_fact} analytically. Here we present the numerical illustration that matrix $S$ is indeed sparse.
Tests are performed for two \HT~matrices.
\begin{example}
   {\color{royalblue}\HT~approximation of the matrix}
\label{ex:holdor1}
\begin{equation} A_{ij} =\begin{cases}
\frac{1}{|r_i - r_j|} & \text{if}~i \neq j\\
0, & \text{if}~i = j
\end{cases} ,
\label{eq:core1}
\end{equation}
where $r_i \in \mathbb{R}^2$ or $r_i \in \mathbb{R}^3$  is the position of the $i$-th element. Elements are randomly distributed in identity square $\Omega = [0,1]\times[0,1]$ in $\mathbb{R}^2$ case and in identity cube $\Omega = [0,1]\times[0,1]\times[0,1]$ in $\mathbb{R}^3$ case.
{\color{royalblue}The \HT~approximation accuracy is $\epsilon = 10^{-6}$.}
In Figure~\ref{fig:nnz} {\color{royalblue} the factor $S$ of the sparse factorization of the \HT~matrix with the core \eqref{eq:core1} is marked by ``inv''.}
\end{example}

\begin{example}
\label{ex:holdor2}
{\color{royalblue}\HT~approximation of the matrix}
\begin{equation} A_{ij} = 2\delta_{ij} + \exp(-||r_i - r_j||^2),
\label{eq:core2}
\end{equation}
where $r_i \in \mathbb{R}^2$ or $r_i \in \mathbb{R}^3$ is the position of the $i$-th element. Elements are randomly distributed in identity square $\Omega = [0,1]\times[0,1]$ in $\mathbb{R}^2$ case and in identity cube $\Omega = [0,1]\times[0,1]\times[0,1]$ in $\mathbb{R}^3$ case. {\color{royalblue} The \HT~approximation accuracy is $\epsilon = 10^{-6}$,}
In Figure~\ref{fig:nnz} {\color{royalblue}the factor $S$ of the sparse factorization of the \HT~matrix with the core \eqref{eq:core2} is marked} by ``exp''.
\end{example}

{\color{royalblue}
We consider the sparse factorization \eqref{eq:main_fact} of the \HT~matrices from Examples~\ref{ex:holdor1}~and~\ref{ex:holdor2} in 2D and 3D.
In Figure~\ref{fig:nnz} we show the number of nonzero elements \textbf{per row} in the factor $S$. Since for all considered matrices the number of nonzero elements per row is constant (and does not grow with the matrix size), we conclude that the matrix $S$ in factorization
\eqref{eq:main_fact} is indeed sparse.}
\begin{figure}[H]
 \centering
 \includegraphics[scale=0.34]{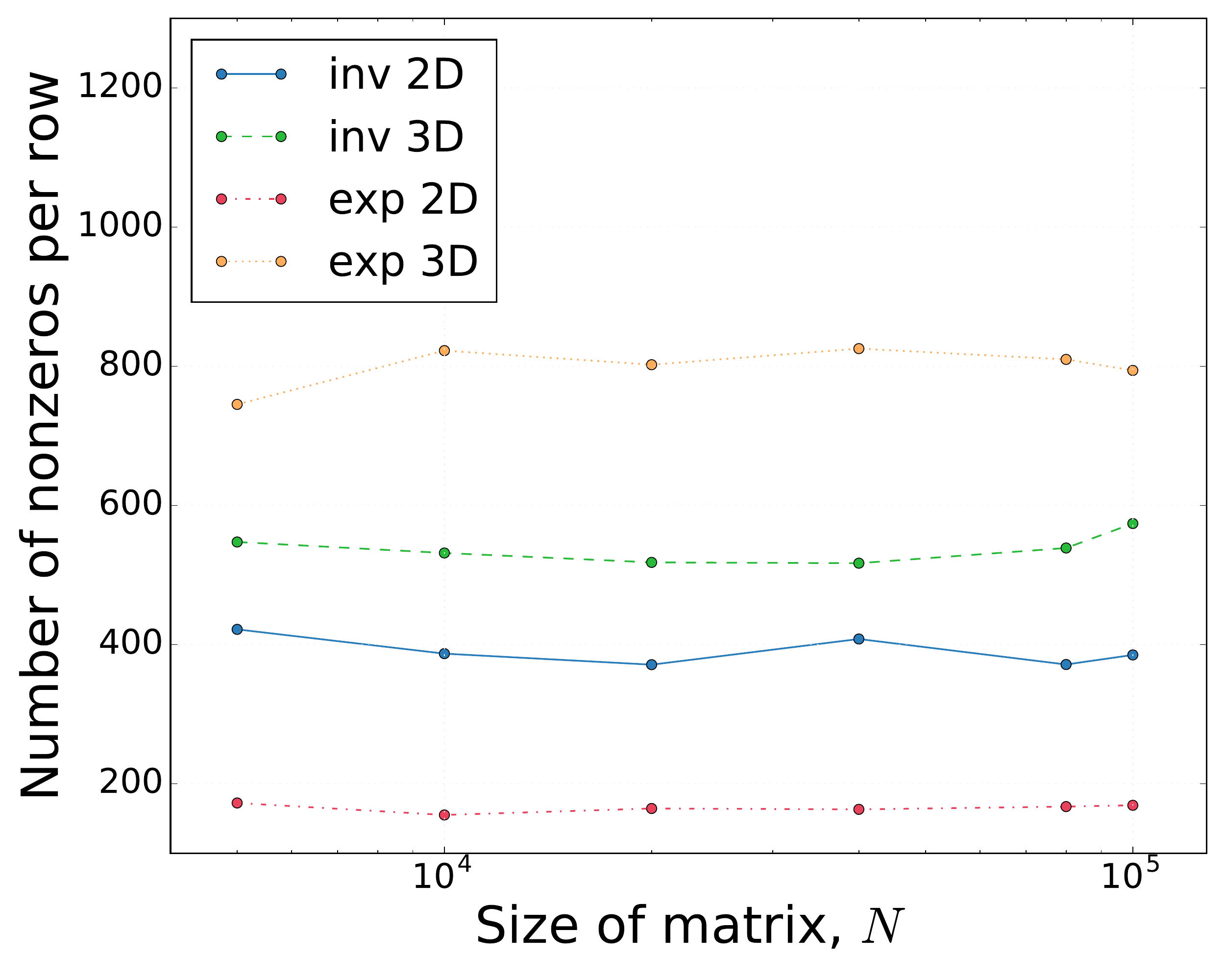}
 \caption{Number of nonzero elements per row {\color{royalblue}in factor $S$} for \HT~matrices from Examples~\ref{ex:holdor1}~and~\ref{ex:holdor2} in 2D and 3D.}
 \label{fig:nnz}
\end{figure}%

{\color{royalblue}
\subsection{The storage requirements and timing of the sparsification algorithm}
}

In Figure~\ref{fig:timing} we show the time of the sparse factorization of \HT~matrices with the cores \eqref{eq:core1} and \eqref{eq:core2} {\color{royalblue}(approximation accuracy is $\varepsilon = 10^{-6}$)} denoted by "inv" and "exp".
\begin{figure}[H]
 \centering
 \includegraphics[scale=0.34]{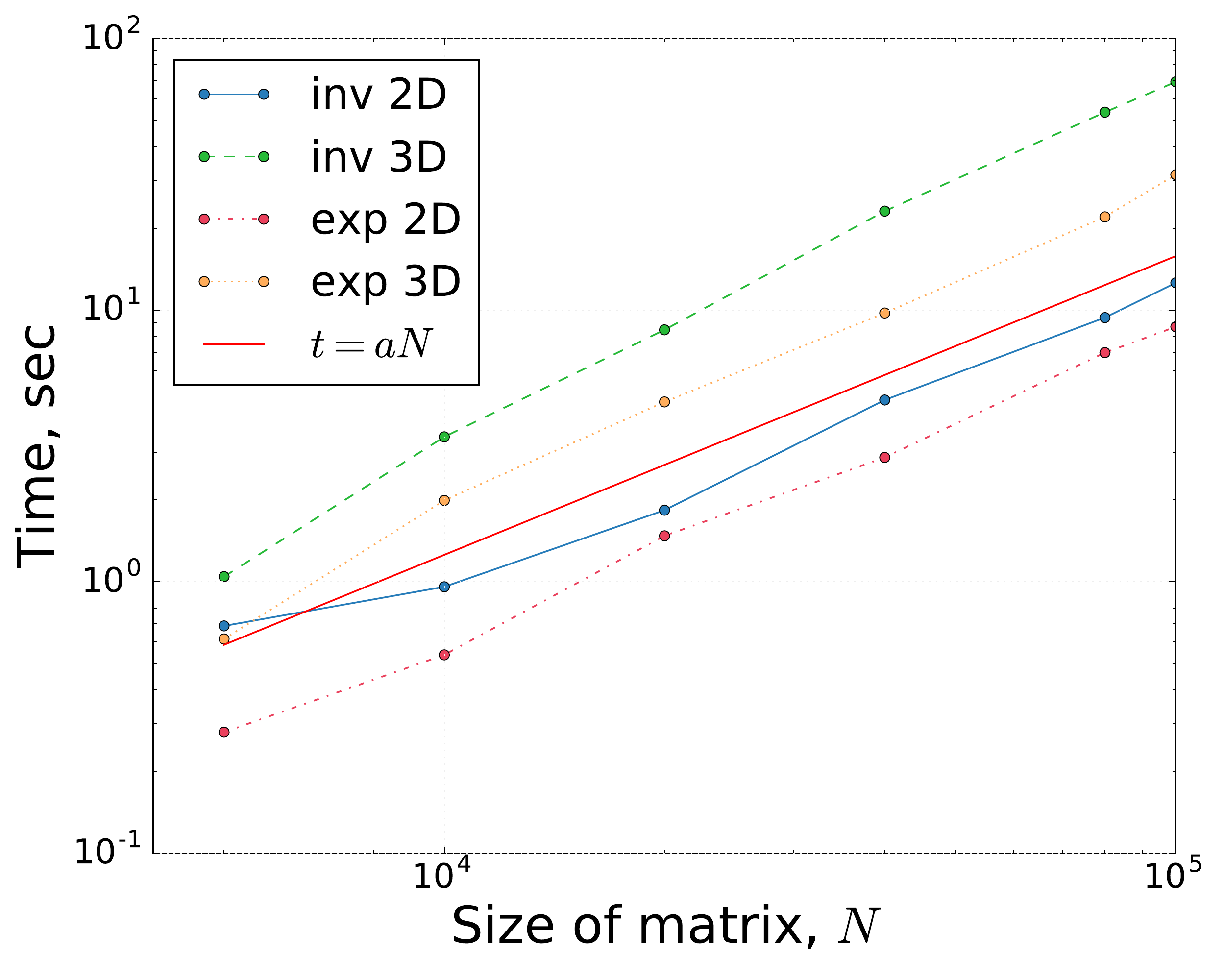}
 \caption{Timing of sparsification building for matrices from Examples~\ref{ex:holdor1}~and~\ref{ex:holdor2} in 2D and 3D.}
 \label{fig:timing}
\end{figure}%
Sparsification time grows almost linearly. {\color{royalblue}In Figure~\ref{fig:mem1} we show the memory requirements of the matrix \eqref{eq:core1} in 2D and 3D, also we show the memory requirements of its \HT~approximation ($\varepsilon = 10^{-6}$), of the sparse factorization (sum of $U,$ $S,$ and $V$) and of the factor $S$ separately.
\begin{figure}[H]
 \begin{subfigure}[t]{.5\textwidth}
   \centering
 \includegraphics[scale=0.24]{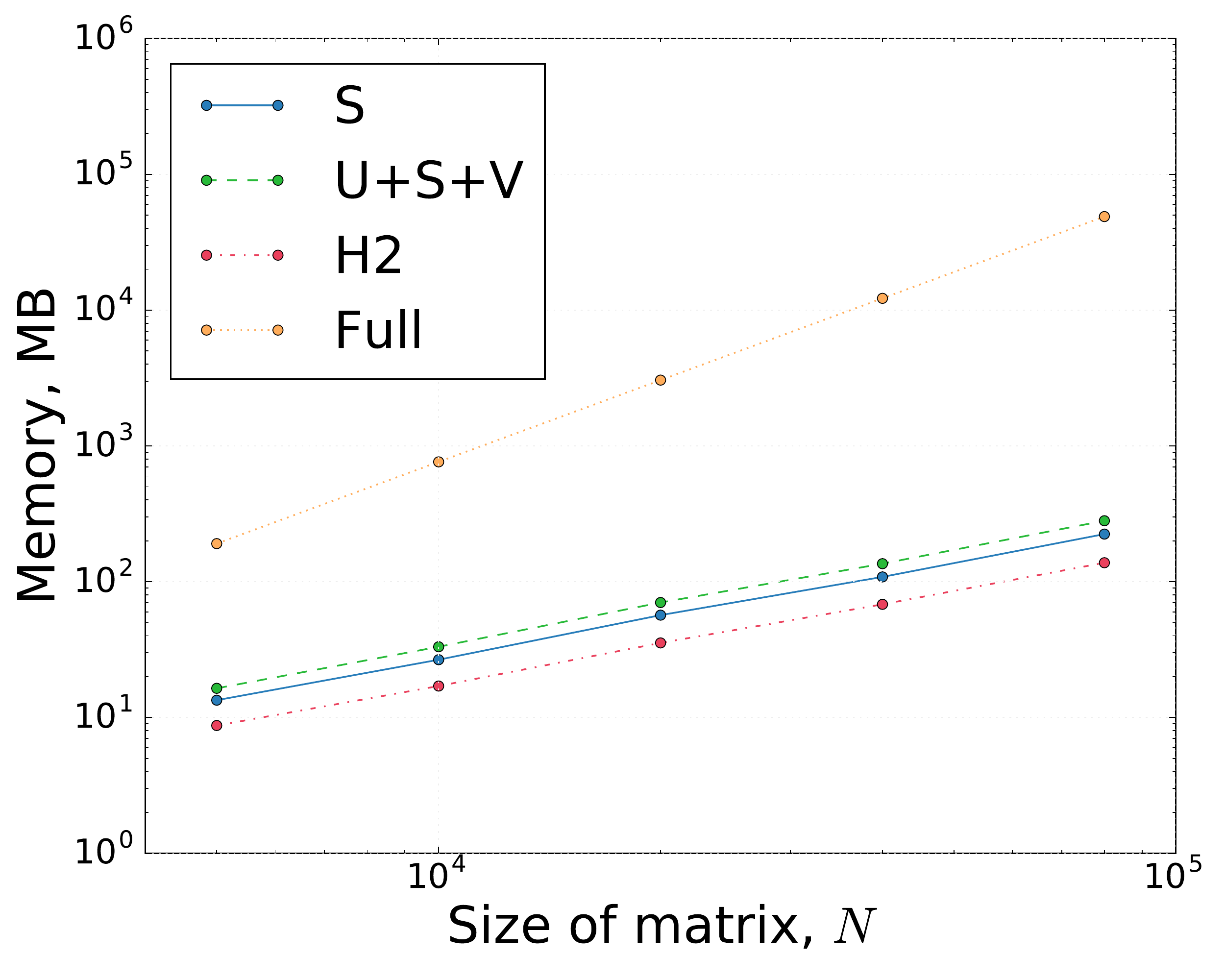}
 \caption{2D case}
 \end{subfigure}%
 \begin{subfigure}[t]{.5\textwidth}
   \centering
   \includegraphics[scale=0.24]{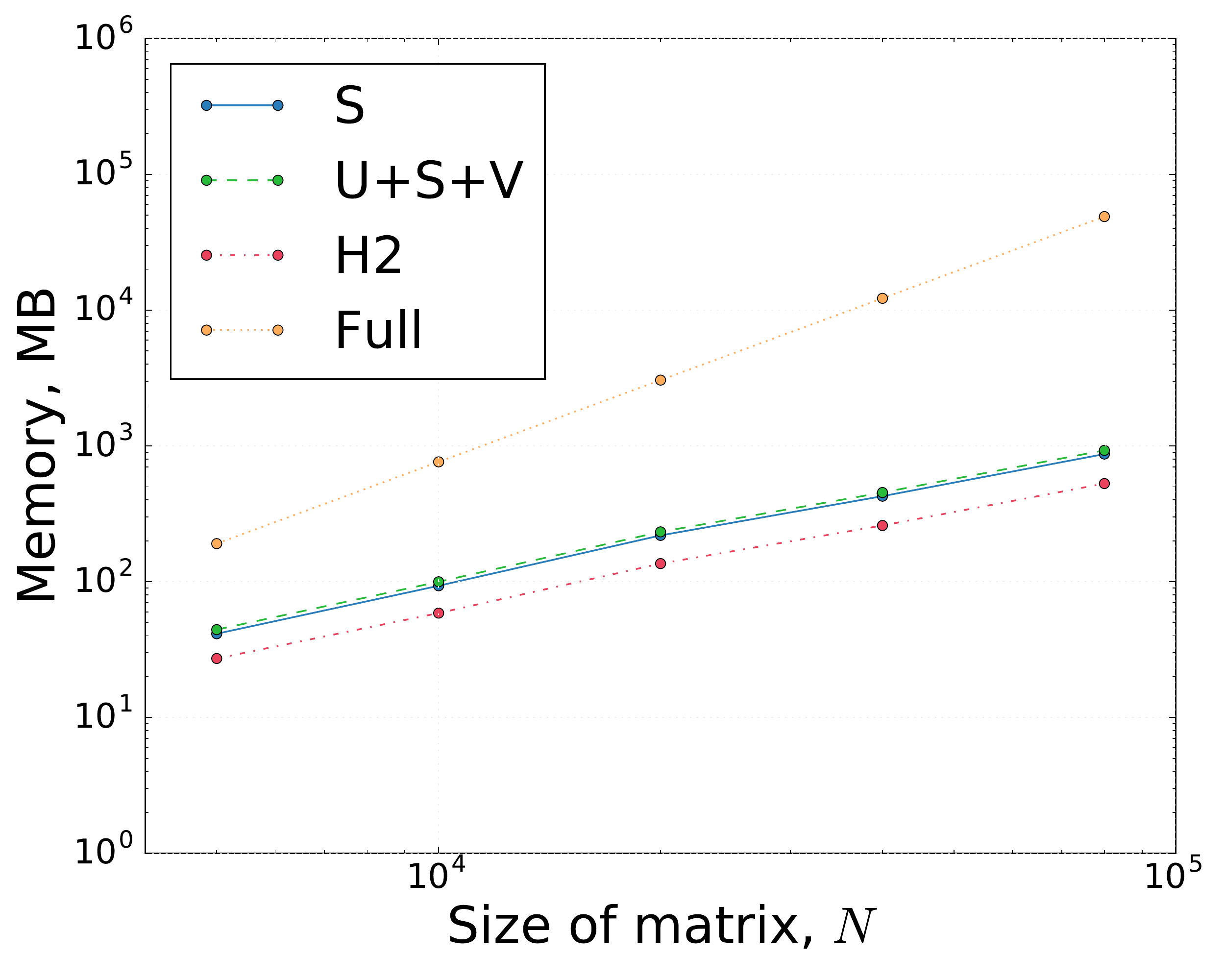}
   \caption{3D case}
 \end{subfigure}
 \caption{Storage requirements for the original matrix \eqref{eq:core1}, its \HT~approximation and the sparse factorization.}
 \label{fig:mem1}
\end{figure}%
In Figure~\ref{fig:mem2} we show the memory requirements of the matrix \eqref{eq:core2} in 2D and 3D, also we show the memory requirements of its \HT~approximation ($\varepsilon = 10^{-6}$), of the sparse factorization (sum of $U,$ $S,$ and $V$) and of the factor $S$ separately.
\begin{figure}[H]
 \begin{subfigure}[t]{.5\textwidth}
   \centering
 \includegraphics[scale=0.24]{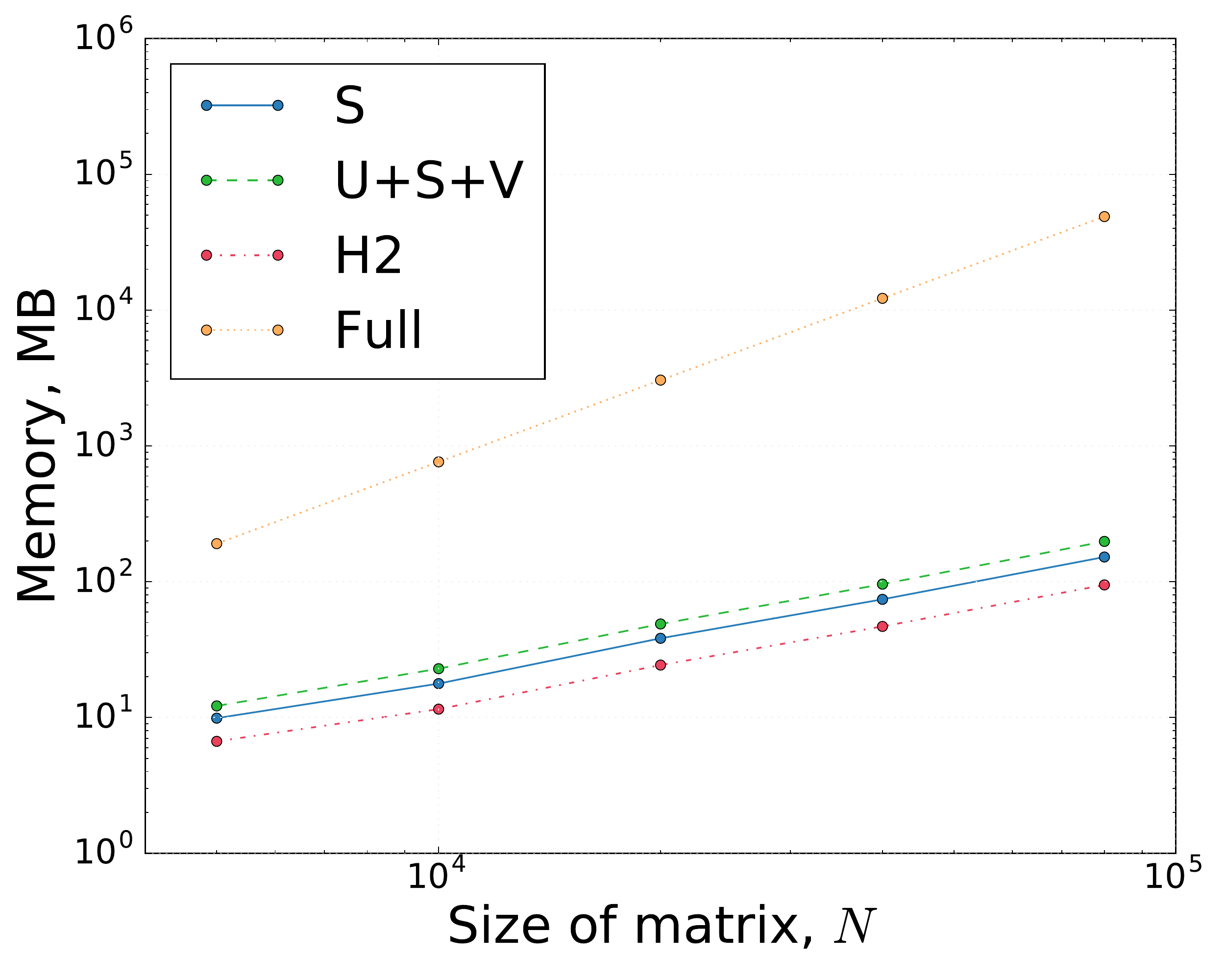}
 \caption{2D case}
 \end{subfigure}%
 \begin{subfigure}[t]{.5\textwidth}
   \centering
   \includegraphics[scale=0.24]{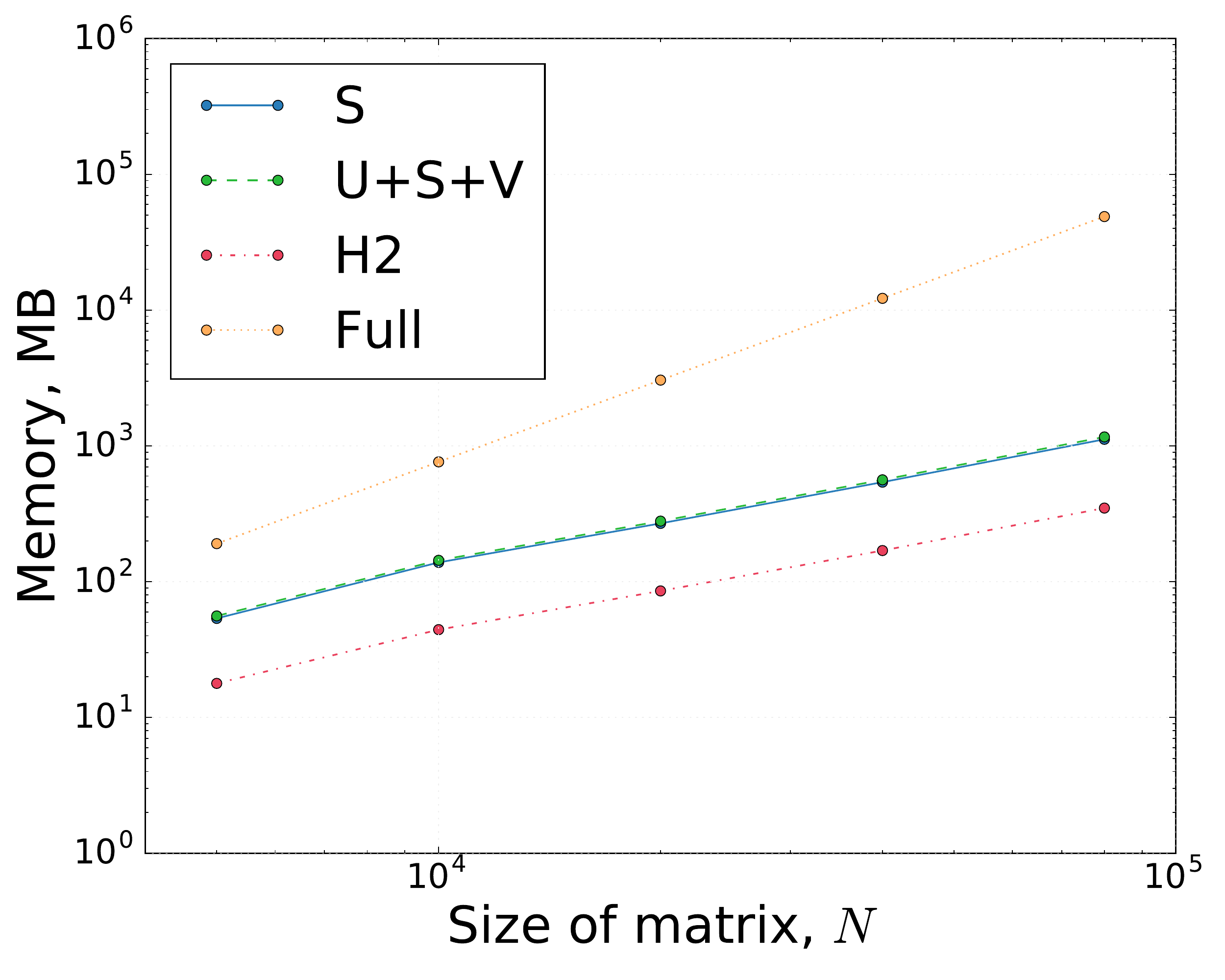}
   \caption{3D case}
 \end{subfigure}
 \caption{Storage requirements for the original matrix \eqref{eq:core2}, its \HT~approximation and the sparse factorization.}
 \label{fig:mem2}
\end{figure}%

For all examples, the memory requirements of both \HT~and the sparse factorization grows almost linearly, unlike the memory requirements of the original matrix which scales quadratically.}

\subsection{Comparison to HODLR}
Paper\cite{greengard-hodlr-2016} considers the HODLR approximation {\color{royalblue} of the dense matrix} and {\color{royalblue}its} factorization as an efficient way to compute the determinant of the dense matrix. We propose the \HT~approximation {\color{royalblue} of the dense matrix}, {\color{royalblue}its} sparsification and factorization of the sparse matrix as an alternative. {\color{royalblue}The triangular factorization of the sparse matrix is computed by CHOLMOD\cite{Davis-cholmod-2009} package.} Tests are performed for 3D data, for matrix
$$ A_{ij} = 2\delta_{ij} + \exp(-||r_i - r_j||^2),$$
where $r_i \in \mathbb{R}^3$ is the position of the $i$-th element. {\color{royalblue}Both HODLR and \HT~approximation accuracy is $\varepsilon = 10^{-6}$. The HODLR factorization accuracy is $\vartheta = 10^{-5}$, the accuracy of the triangular factorization of the sparse matrix is $\vartheta = 10^{-10}$ (which is redundant, but the used package has no accuracy options).}
In Figure~\ref{fig:hist_hodlr} we show the time comparison for this two approaches.
\begin{figure}[H]
 \centering
 \includegraphics[scale=0.34]{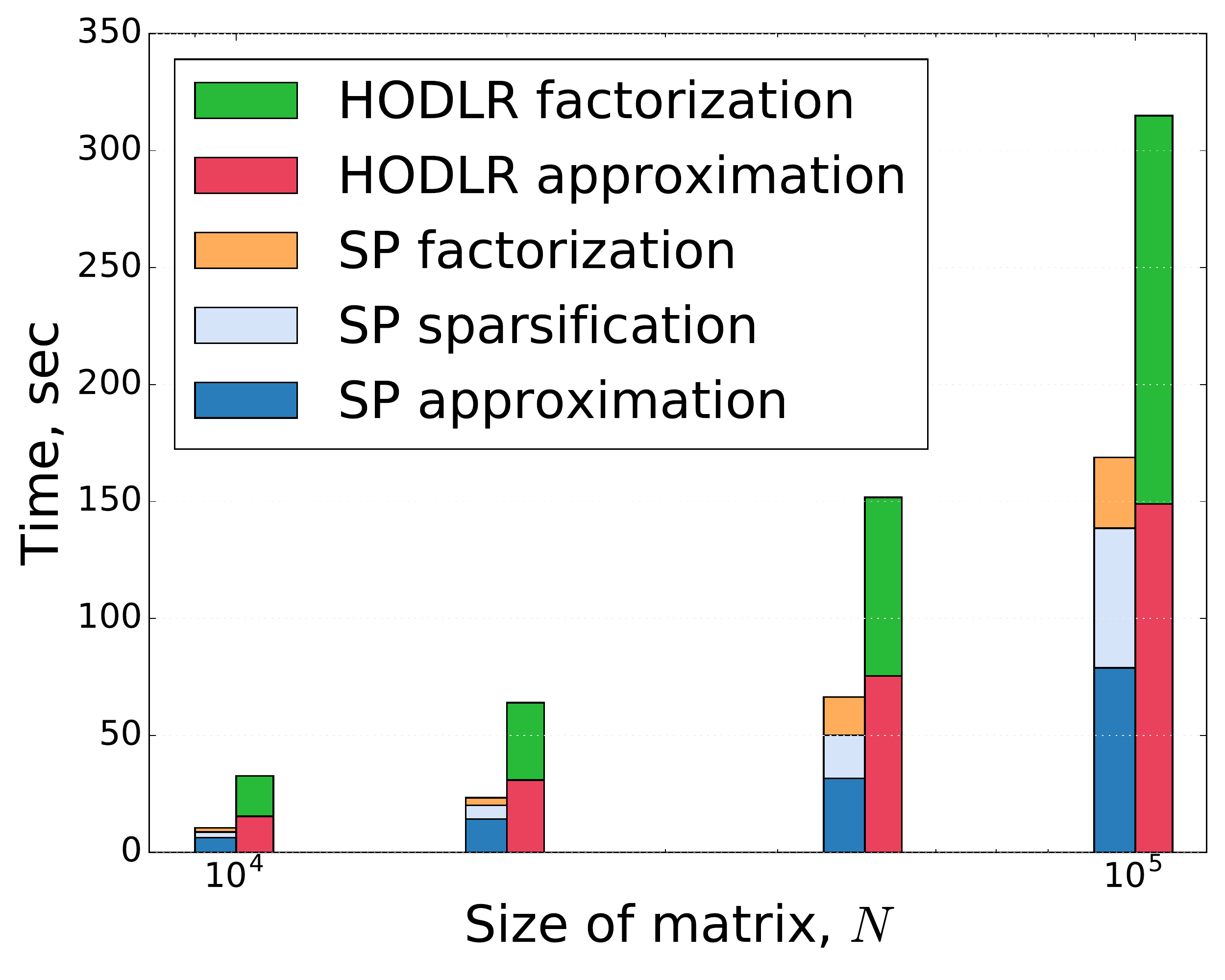}
 \caption{Comparison of the \HT~sparsification approach with HODLR solver in 3D.}
 \label{fig:hist_hodlr}
\end{figure}$\\$
Total solution time comparison in 2D and 3D is presented in Figure~\ref{fig:tot_hodlr}.
\begin{figure}[H]
 \centering
 \includegraphics[scale=0.34]{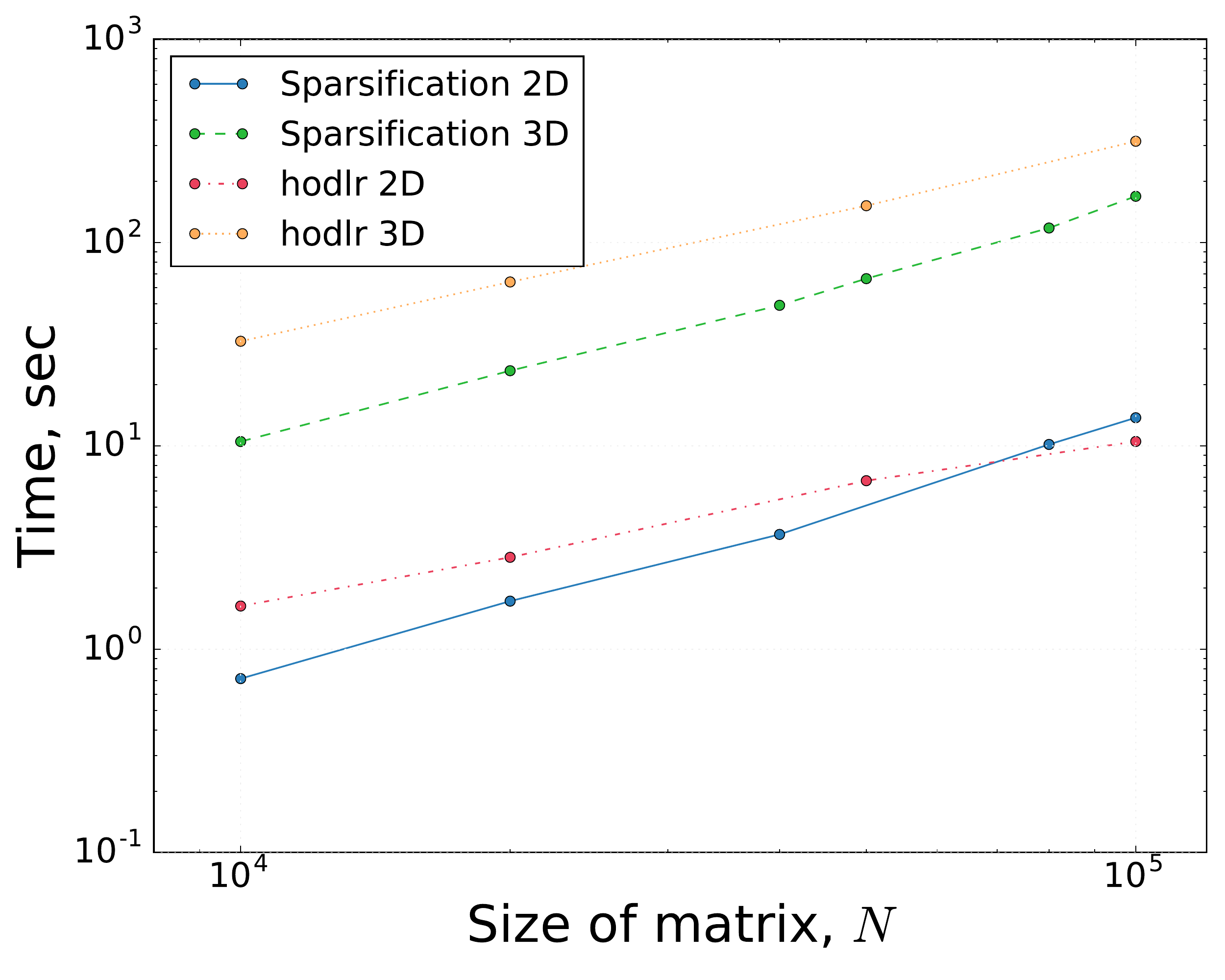}
 \caption{Comparison of the \HT~sparsification approach with HODLR solver in 2D and 3D.}
 \label{fig:tot_hodlr}
\end{figure}$\\$

\subsection{Comparison to H2Lib liberary}

In this subsection we compare the approach proposed in this paper (sparse non-extensive factorization of the \HT~matrix, triangular factorization of the sparse matrix and then the solution of the system) with the approach based on \HT-LU factorization, proposed in the work\cite{borm-h2lu-2013} and implemented in H2lib package\cite{Hakb-h2-lib}. The \HT-LU factorization takes the \HT~matrix and returns the triangular factors $L$ and $U$ in \HT~format. {\color{royalblue}The triangular factorization of the sparse matrix is computed by CHOLMOD\cite{Davis-cholmod-2009} package.} Tests are performed on the following problem.
\begin{example}
   \label{ex:h2_cube}
   Consider the Dirichlet boundary value problem for Laplace's equation
   \begin{equation}
       \label{eq:lap_bem_d}
       \begin{cases} -\Delta u(x) = 0  \quad x\in \Omega, \\ ~u(x) = f(x) \quad x\in  \Gamma =\partial\Omega, \end{cases}
   \end{equation}
   where $\Omega = [-1,1]^3$ is a cube.
   The standard technic: using the single layer potential we obtain boundary integral formulation of the equation~\eqref{eq:lap_bem_d}.
   \begin{equation}
       \label{eq:lap_bem_i}
       \int_{\Gamma } G(x-y)\varphi (y)\,ds_y = f(x),
   \end{equation}
   where $G(x) = \frac{1}{4\pi}\frac{1}{|x|} $ is the fundamental solution for the Laplace operator.
   Then we discretize the integral equation~\eqref{eq:lap_bem_i} on the triangular grid on $\Gamma$ using the Galerkin method.
   Obtained dense matrix is approximated in \HT~format with accuracy $\varepsilon = 10^{-6}$.
   \end{example}
{\color{royalblue}The accuracy of the \HT-LU factorization is $\vartheta = 10^{-6}$, the accuracy of the triangular factorization of the sparse matrix is $\vartheta = 10^{-10}$.}

In Table~\ref{tab:h2lib} we show the time comparison of the solution of the system with matrix from Example~\ref{ex:h2_cube}, using \HT-LU and sparsification approaches. For the \HT-LU we show the approximation in \HT~format and factorization time, for the sparsification we show the approximation in \HT~format, sparsification and sparse factorization time. In both cases, time of the solution of the system with factorized matrix is negligible, so we do not show it.
\begin{table}[H]
\centering
\begin{tabular}{ l | c  c  c  c }
 N & 3072&12288&49152&196608 \\
\hline
\hline
H2Lib approx.,~sec &6.72& 24.26&104.97&487.24\\
H2Lib factor.,~sec &13.56&164.00&1712.17&10970.43\\
\hline
Sp. approx.,~sec  & 4.8& 26.34&110.91&399.43 \\
Sp. sp.,~sec  & 2.16&10.12&63.47&323.23\\
Sp. factor.,~sec  & 0.19& 1.30& 7.83&56.89 \\
\end{tabular}
\caption{Comparison with H2Lib.}
\label{tab:h2lib}
\end{table}

In Figure~\ref{fig:comp_h2lib} we show the comparison of the total time, required for the system solution.

\begin{figure}[H]
 \centering
 \includegraphics[scale=0.34]{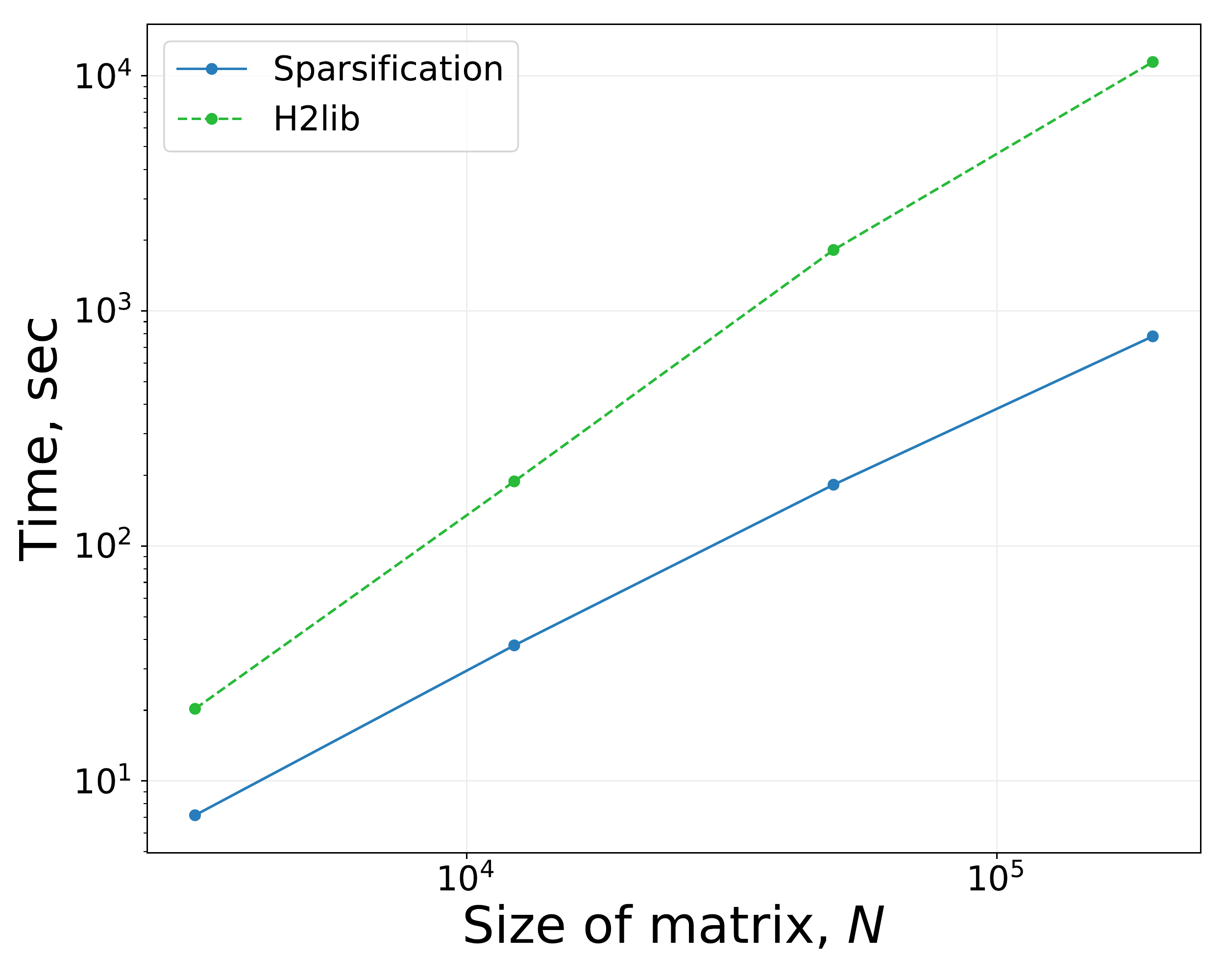}
 \caption{Total time required for the system solution using \HT~sparsification and H2lib solver.}
 \label{fig:comp_h2lib}
\end{figure}$\\$
The sparsification approach has not only better timing, but also better asymptotics.

\subsection{Sparsification method as a preconditioner}
\HT~matrix is an efficient tool to multiply a matrix by a vector. This allows to apply iterative solvers like GMRES to solution of the systems with \HT~matrix. But the preconditioning is still a challenging problem due to complexity of the factorization of the \HT~matrix. We propose to use the approximately factored sparsification of the \HT~matrix as a preconditioner to iterative method.
For tests we use randomly distributed 3D data with following interaction matrix.
\begin{example}
\label{ex:mat1}
$$ A_{ij} =\left\{\begin{matrix}
1 & \text{if}~i = j\\
\frac{|r_i - r_j|}{d} & \text{if}~0<|r_i - r_j|<d\\
\frac{d}{|r_i - r_j|}, & \text{if}~|r_i - r_j|\geqslant d
\end{matrix}\right. ,$$
where $r_i \in \mathbb{R}^3$ is the position of the $i$-th element.
\end{example}
This example is used for testing of IFMM {\color{royalblue}(Inverse Fast Multipole Method)} method as a preconditioning in\cite{darve-ifmm_prec-2015}, so we have chosen this example for the convenient comparison.

This matrix is useful for the iterative tests since condition number of this matrix significantly depends on the parameter $d$: the larger $d$ is, the larger condition number is.

\textbf{Example with well-conditioned matrix.}
First, consider the matrix from Example~\ref{ex:mat1} with $d = 10^{-3}$, condition number $\mathrm{cond}(A) = 10$. We solve this system using GMRES iterative solver for the matrix approximated in \HT~format with accuracy $\epsilon = 10^{-9}$ and as a preconditioner we use \HT~approximation of the matrix $A$ with accuracy $\epsilon = 10^{-3}$ sparsified and factorized with ILUt decomposition.  Figure~\ref{fig:it1_comv} shows convergence of the GMRES method with  different ILUt threshold parameters. The required residual of the GMRES method $ r = 10^{-10}$.
\begin{figure}[H]
 \centering
 \includegraphics[scale=0.34]{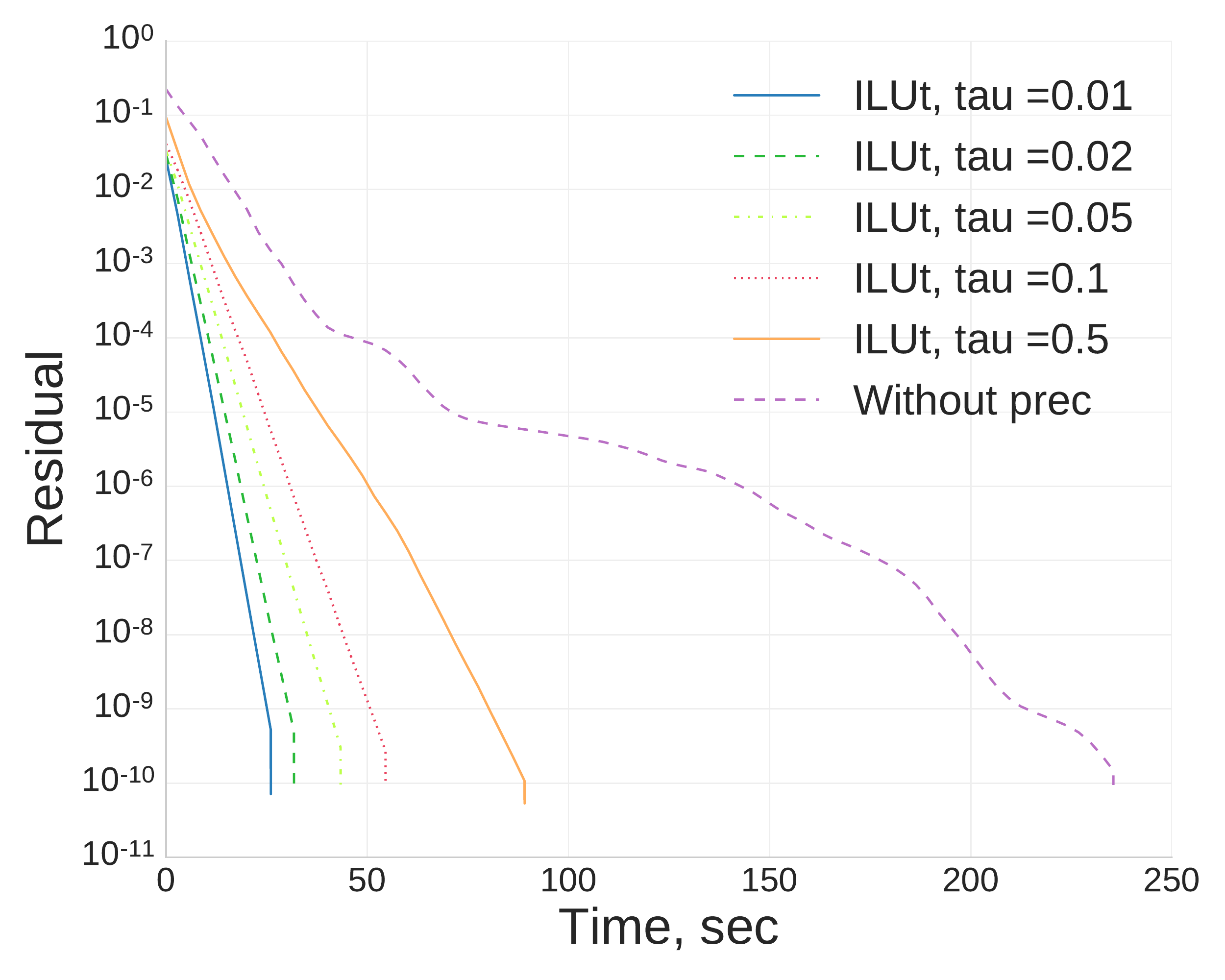}
 \caption{Convergence of GMRES with different drop tolerance parameter of the ILUt preconditioner}
 \label{fig:it1_comv}
\end{figure}%
The standard trade off: the more time on the preconditioner building we spend, the faster iterations converge.
Figure~\ref{fig:it1_time} illustrates the total time required for the system solution (including the sparsification construction).
\begin{figure}[H]
   \centering
     \includegraphics[scale=0.34]{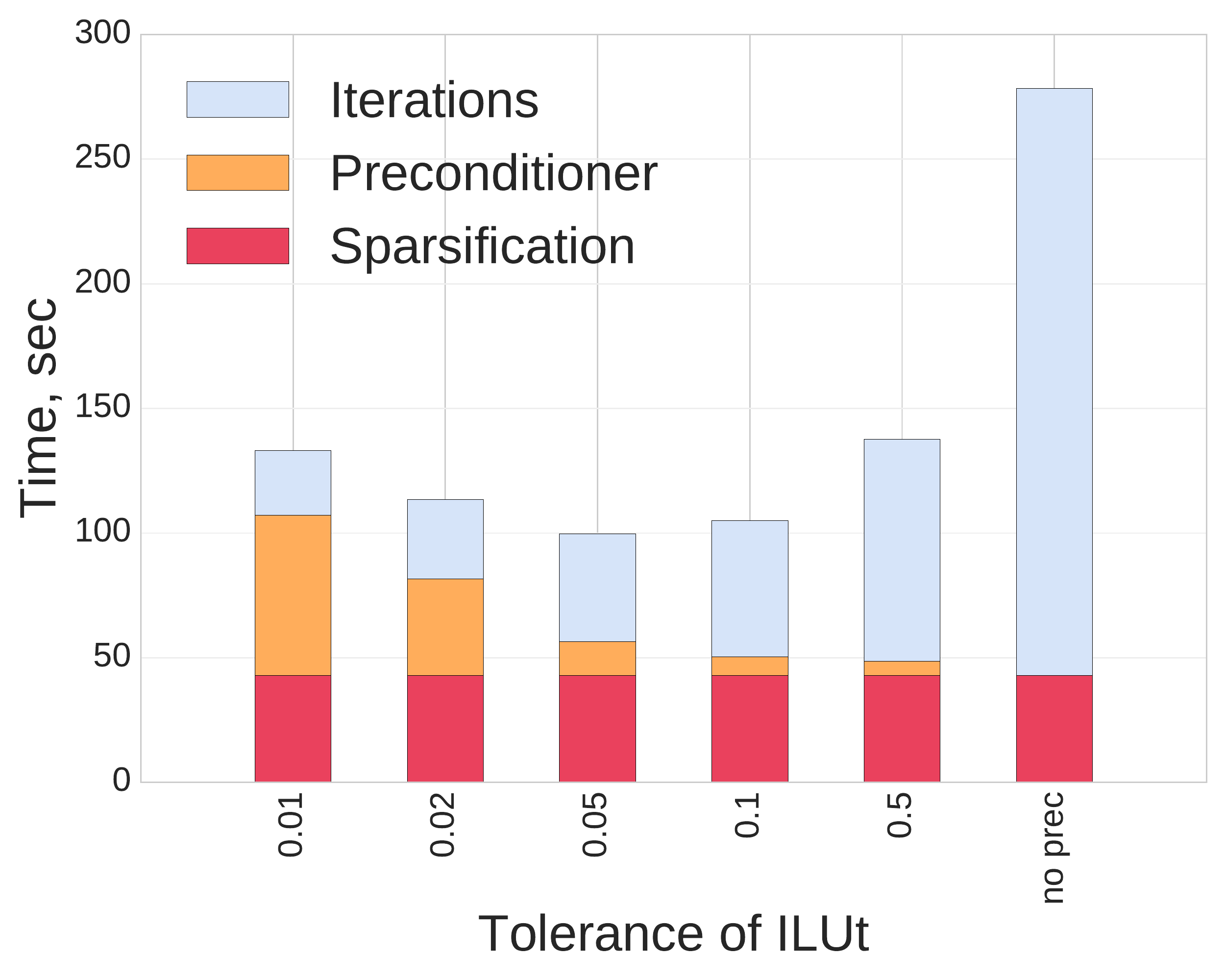}
     \caption{Contribution into total time of sparsification, factorization and iterations}
     \label{fig:it1_time}
\end{figure}%

We show the total time required for \HT~matrix sparsification, time required for building the ILUt preconditioner with dropping tolerance $\tau = 2 \times 10^{-2}$ and iterations timing in Figure~\ref{fig:it1_tot}.

\begin{figure}[H]
 \begin{subfigure}[t]{.5\textwidth}
   \centering
 \includegraphics[scale=0.24]{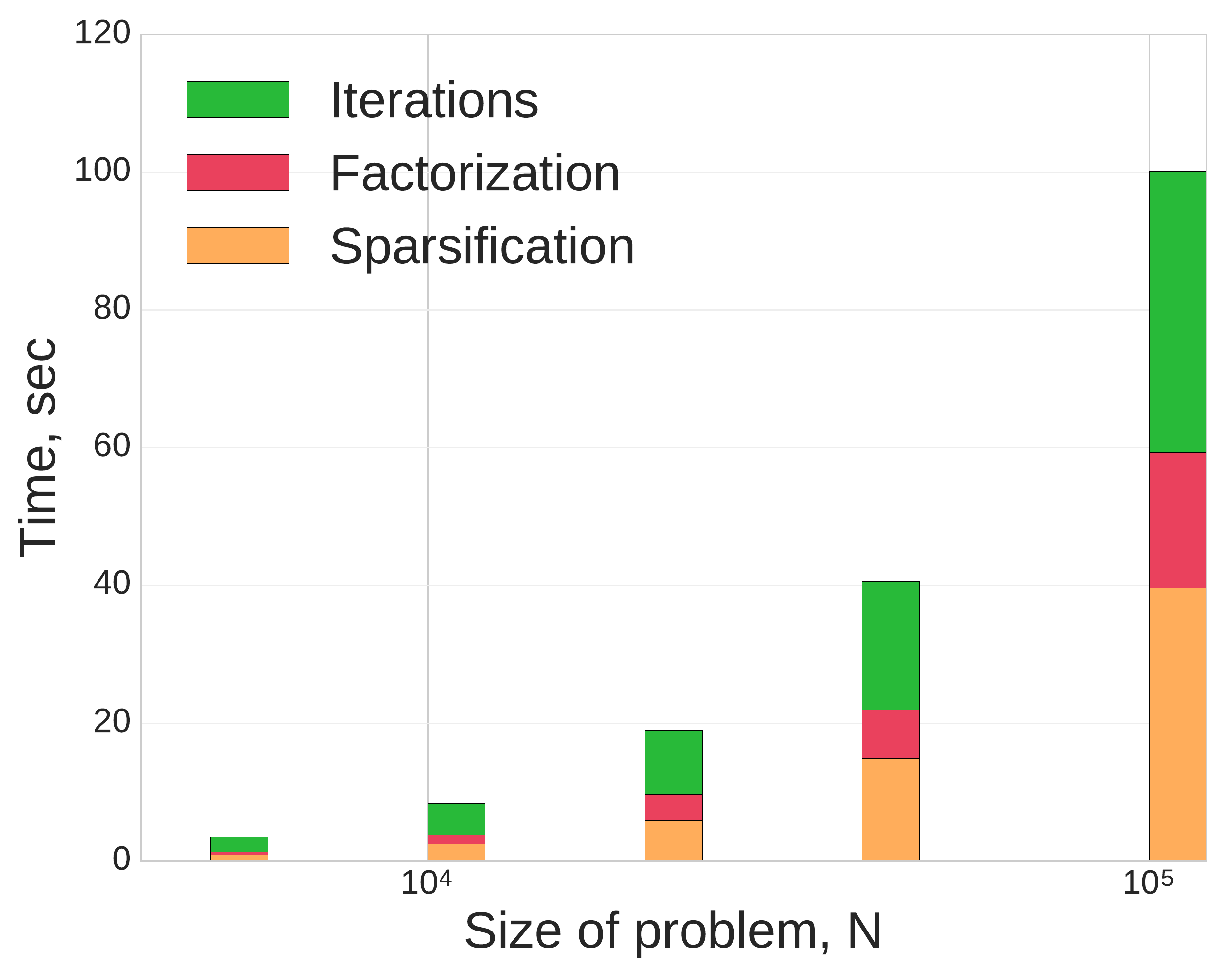}
 \caption{Contribution into total time of sparsification, factorization and iterations}
 \label{fig:it1_tot}
 \end{subfigure}%
 \begin{subfigure}[t]{.5\textwidth}
   \centering
   \includegraphics[scale=0.24]{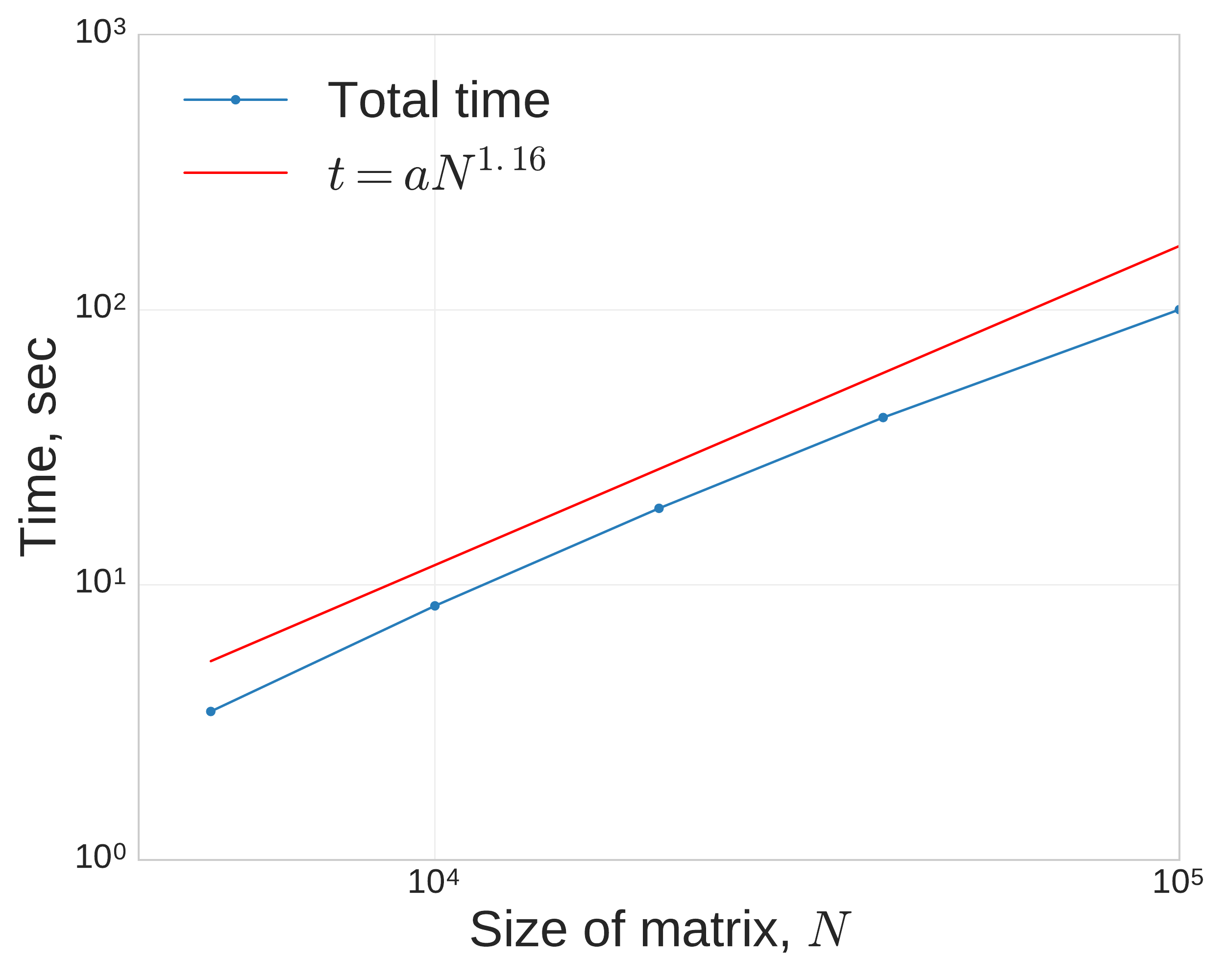}
   \caption{Total time}
   \label{fig:it2_tot}
 \end{subfigure}
 \caption{Total solution time}
\end{figure}%

\textbf{Example with ill-conditioned matrix.}
Consider the matrix from Example~\ref{ex:mat1} with $d = 10^{-2}$, condition number $\mathrm{cond}(A) = 10^4$.
As in the previous paragraph, we solve this system using GMRES iterative solver for the matrix approximated in \HT~format with accuracy $\epsilon = 10^{-9}$ and as a preconditioner we used \HT~approximation of the matrix $A$ with accuracy $\epsilon = 10^{-3}$ sparsified and factorized with ILUt decomposition. In Figure~\ref{fig:it2_comv} convergence  till the tolerance $10^{-10}$ for different ILUt parameters is shown.

\begin{figure}[H]
 \centering
 \includegraphics[scale=0.34]{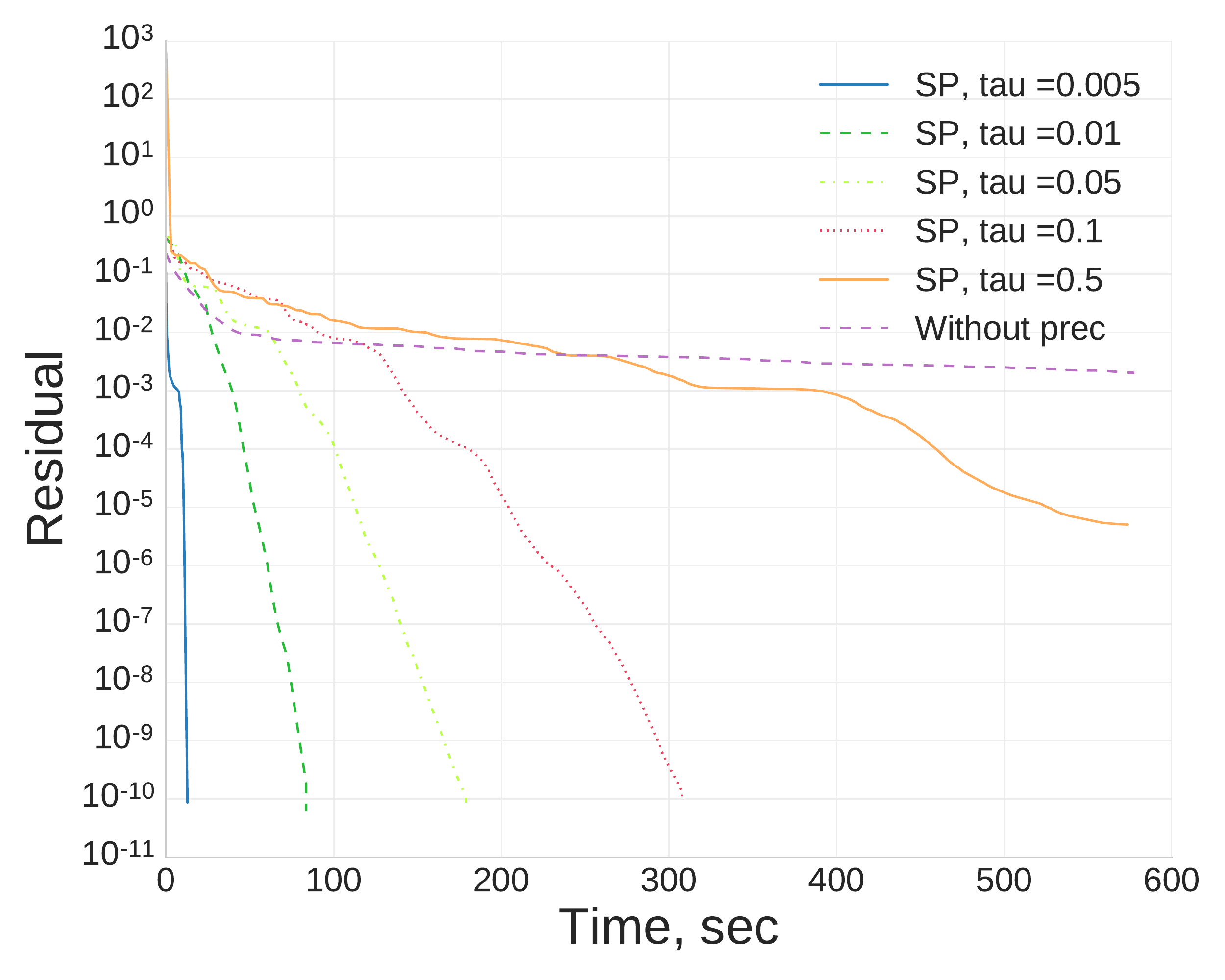}
 \caption{Convergence of GMRES with different preconditioners, $N = 10^5$}
 \label{fig:it2_comv}
\end{figure}%

In Figure~\ref{fig:it2_time} the total solution time for different ILUt parameters is shown.

\begin{figure}[H]
 \centering
 \includegraphics[scale=0.34]{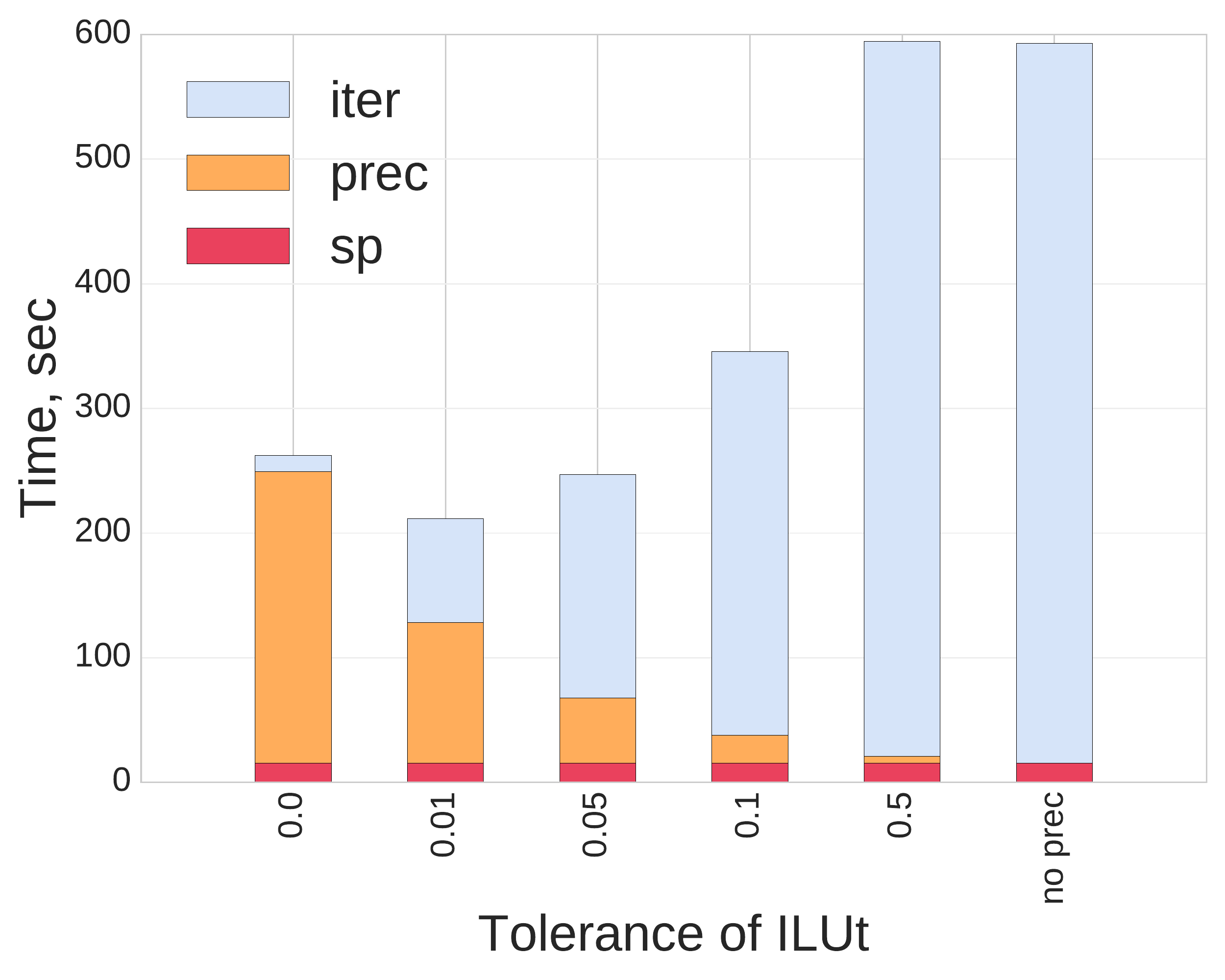}
 \caption{Contribution into total time of sparsification, factorization and iterations, $N = 10^5$}
 \label{fig:it2_time}
\end{figure}%

As we can see, the optimal ILUt parameter for this problem is $\tau = 10^{-2}$.
Figure~\ref{fig:it2_tot} presents the total time required for solution of the system with optimal ILUt parameter.

\begin{figure}[H]
 \centering
 \begin{subfigure}[t]{.5\textwidth}
   \centering
     \includegraphics[scale=0.24]{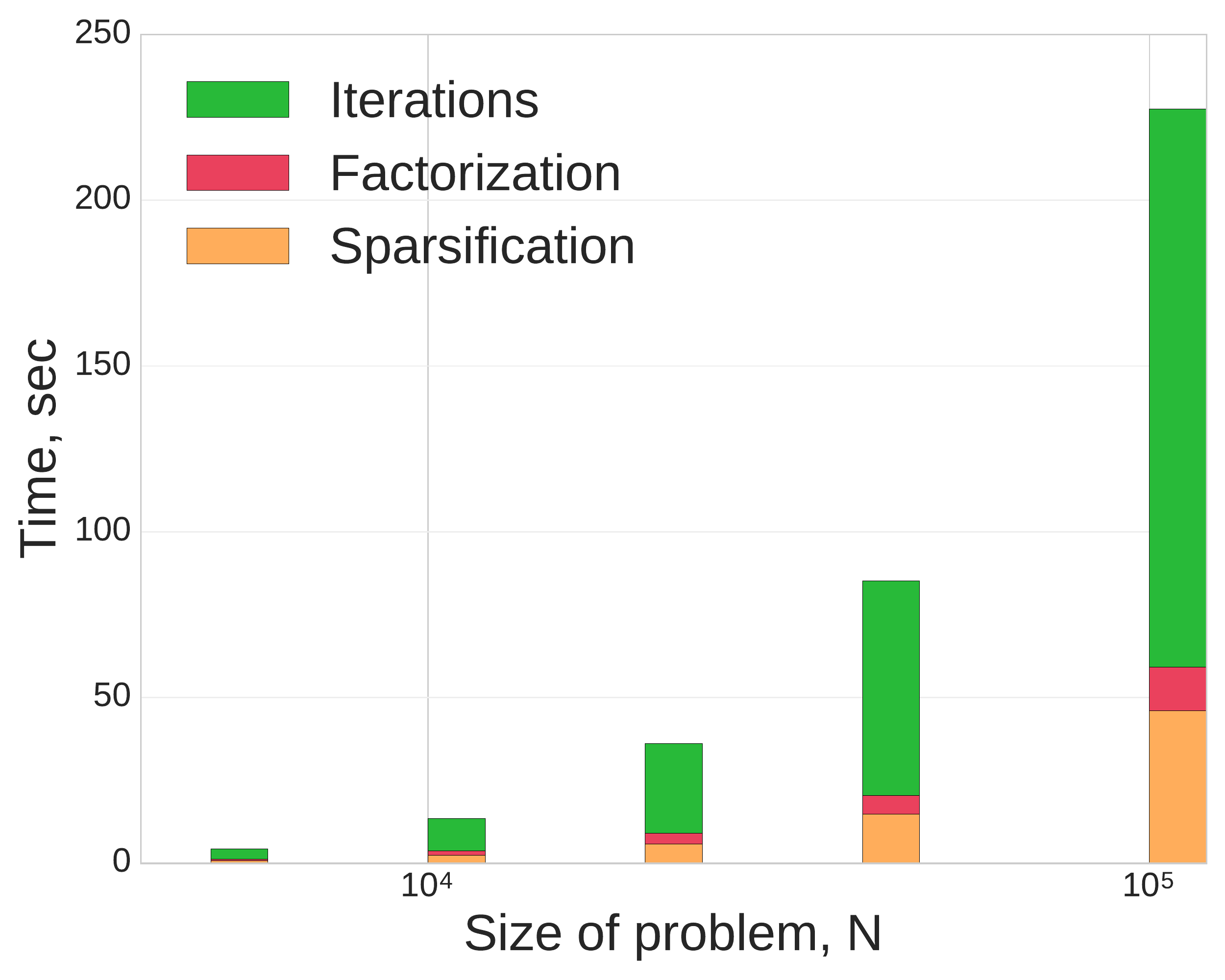}
     \caption{Contribution into total time of sparsification, factorization and iterations}
     \label{fig:it2_tot}
 \end{subfigure}%
 \begin{subfigure}[t]{.5\textwidth}
   \centering
   \includegraphics[scale=0.24]{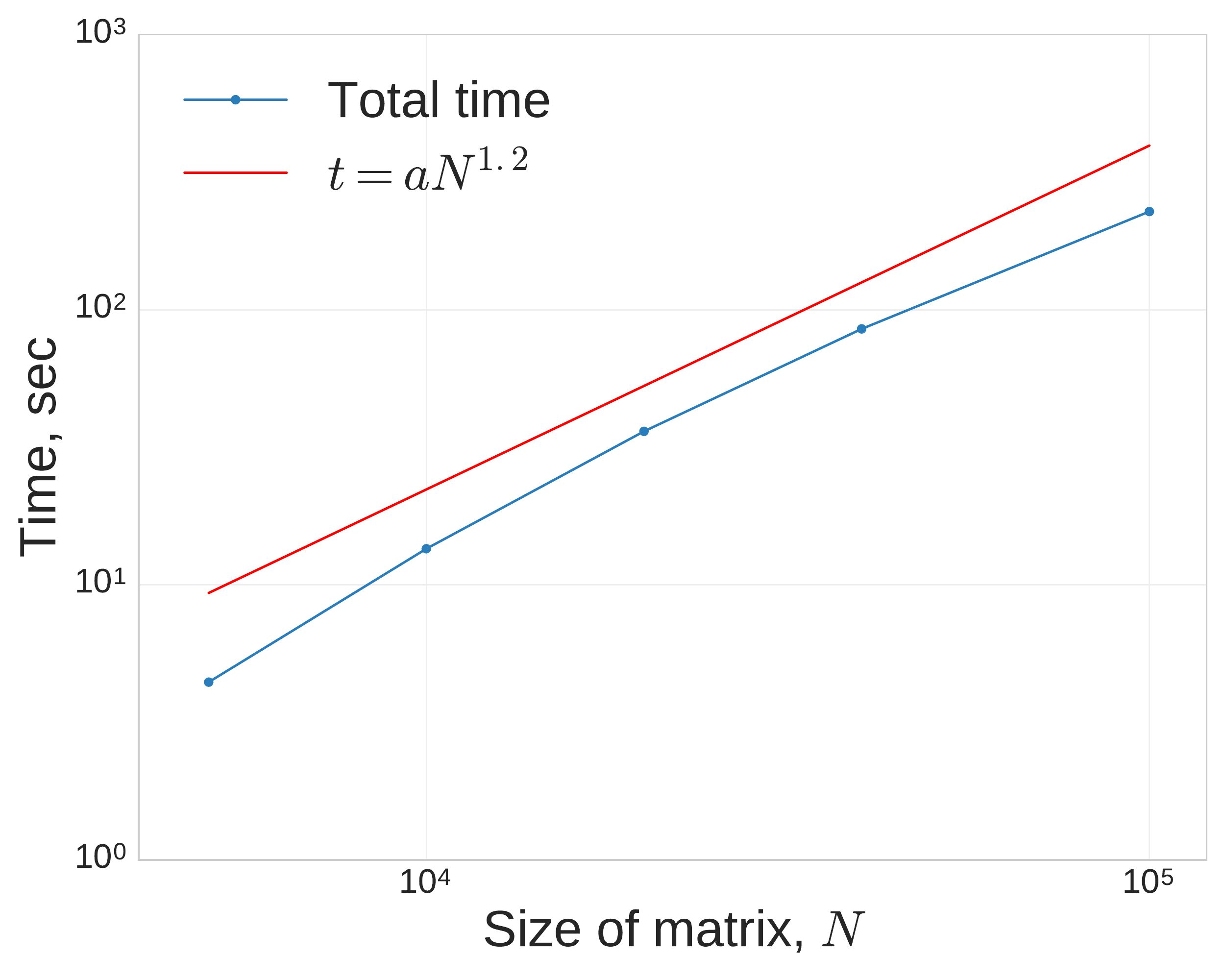}
   \caption{Total time}
   \label{fig:it2_tot}
 \end{subfigure}
 \caption{Total solution time}
\end{figure}%
%
We compared the results of our solver (GMRES preconditioned by sparsification method) with results of IFMM solver presented in\cite{darve-ifmm_prec-2015} for the same problem, same parameters and similar hardware. For test we used matrix from Example~\ref{ex:mat1} with 3D data, matrix size $N = 10^5$, iterative method GMRES till the residual $r = 10^{-10}$, parameters $d = \{ 10^{-3},  10^{-2}\}$ for well- and ill-conditioned systems. We present the best total solution time achieved in experiments for both methods in Table~\ref{tab:ifmm}.
\begin{table}[H]
\centering
\begin{tabular}{ l | c  c  c  c }
 &$d = 10^{-3}$ & $d = 10^{-2}$ \\
\hline
IFMM solution time,~sec & 118& 301\\
Sparsification solution time,~sec&96& 218 \\
\end{tabular}
\caption{Comparison with IFMM method.}
\label{tab:ifmm}
\end{table}
Note that sparsification method is implemented in Python programing language and can be significantly improved by applying Cython or switching to another programing language (as C++ or Fortran).

\section{Related work}
Hierarchical low-rank  matrix formats such as $\mathcal{H}${\color{royalblue}\cite{hackbusch-h-1999,hackbusch-h-2000,hackbusch-h2-2000,Borm-h-2003}},
HODLR\cite{ambikasaran-hodlersolver-2013,greengard-hodlr-2016} {\color{royalblue} (Hierarchical Off-Diagonal Low-Rank)},
HSS\cite{martinsson-hss_integr-2005,chandrasekaran-HSSsolver-2006,ShDewilde-hss-2007} {\color{royalblue} (Hierarchically \\ Semiseparable)},
\HT~\cite{hackbusch-h2-2000,Borm-h2-2010} matrices and etc., that are matrix analogies of the fast multipole
method\cite{GrRo-fmm-1988,GrRo-fmm-1987}, have two significant features: they do store information in data-sparse formats and they provide the fast matrix by vector product. Fast ($\OB(N)$, where $N$ is size of the matrix) matrix by vector product allows to apply iterative solvers. Data-sparse representation allows to store matrix in $\OB(N)$ cells of memory, but storage scheme is usually complicated.

If the hierarchical matrix is ill-conditioned, then pure iterative solver fails and it is required to apply either approximate direct solver or preconditioner (that is also approximate direct solver, probably with lower accuracy).  Due to complex storage schemes of hierarchical matrices, construction of the approximate direct solver is a challenging problem.  There exists two general approaches to approximate direct solution of \HT~matrix: factorization of hierarchical matrix\cite{Beb-hlu-2005,martinsson-hss_integr-2005,ShDewilde-hss-2007}, and sparsification of the hierarchical matrix followed by factorization of the sparse matrix\cite{Ambikasaran-ifmm-2014,sushnikova-se-2016}.

The factorization approach is more popular for hierarchical matrices with strong low-rank structure, also known as hierarchical matrices with weak-admissibility criteria{\color{royalblue}\cite{hackbusch-weak-2004}} \\($\mathcal{H}${\color{royalblue}\cite{hackbusch-h-1999,hackbusch-h-2000}},
HODLR{\color{royalblue}\cite{ambikasaran-hodlersolver-2013,greengard-hodlr-2016}},
HSS{\color{royalblue}\cite{martinsson-hss_integr-2005,chandrasekaran-HSSsolver-2006,ShDewilde-hss-2007}} matrices).
{\color{royalblue} For the $\mathcal{H}$ matrix, the algorithm $\mathcal{H}$-LU\cite{Beb-hlu-2005} with almost linear complexity was proposed. This algorithm has been successfully applied to many problems.} The major drawback of the $\mathcal{H}$-LU algorithm is that factorization time and memory required for $L$ and $U$ factors can be quite large.
Approximate direct solvers based on factorization of
HSS and
HODLR matrices are {\color{royalblue}also} well studied {\color{royalblue} and found
many\cite{ShDewilde-hss-2007,XiaSh-hss-2009,Mar-hss-2011,solovyev-hss-2014,
ambikasaran-hodlersolver-2013,greengard-hodlr-2016,martinsson-hss_integr-2005}
successful applications}.

{\color{royalblue} One of the approaches to the solution of the systems with \HT~matrices is the sparse factorization (sparsification).} The sparsification approach is usually applied to hierarchical matrices with weak-admissibility criteria (\HT~matrices). Sparsification algorithms transform \HT~matrix into the sparse matrix and then factorize the sparse matrix.
Algorithm, proposed in this paper is the sparsification algorithm. The main difference between presented work and the other {\color{royalblue} sparse factorizations\cite{Ambikasaran-ifmm-2014,darve-ifmm_prec-2015,sushnikova-se-2016} is a size of sparse factors. The main benefit of the non-extensive sparsification is that it preserves the size of the factorized \HT~matrix, while the other sparsification algorithms return extended factors.
}

\section{Conclusions}
We have proposed a new approach to the solution of the systems with \HT~matrices that is based on sufficient non-extensive sparsification of the \HT~matrix. Proposed sparsification is suitable for any \HT~matrices (including non-symmetric) and preserves such important properties of the matrix as its size, symmetry (if exists) and positive definite (if exists).

\bibliography{lib}
\end{document}